\title
{Two critical periods in the evolution\\
of random planar graphs}
\author{Mihyun Kang\\
Institut f\"ur Mathematik, MA 6-2,
Technische Universit\"at Berlin\\
Stra{\ss}e des 17. Juni 136,
10623 Berlin,
Germany\\
kang@math.tu-berlin.de\\
and\\
Tomasz {\L}uczak\\
Department of Discrete Mathematics, Adam Mickiewicz University\\
61-614 Pozna\'n, Poland\\
tomasz@amu.edu.pl}
\date{8 November 2010}
\newtheorem{theorem}{Theorem}
\newtheorem{lemma}{Lemma}
\newcommand{\pr}{\mathbb{P}}
\newcommand{\aas}{a.a.s.\ }
\newcommand{\eps}{\epsilon}
\newcommand{\Q}{\mathcal Q}
\newcommand{\q}{\mathbf q}
\newcommand{\stacksign}[2]{{\stackrel{\textrm{\scriptsize #1}}{#2}}}
\newcommand{\pl}{\mathrm {pl}}
\newcommand{\ex}{\mathrm {ex}}
\newcommand{\core}{\mathrm{core}}
\renewcommand{\ker}{\mathrm{ker}}
\newcommand{\dd}{\mathrm{df}}
\newcommand{\Pcr}{\mathrm {cr}}
\newcommand{\Gcr}{\bar{\mathrm {cr}}}
\newcommand{\Gex}{\bar{\ex}}
\begin{document}
\maketitle

\begin{abstract}
Let $P(n,M)$ be a graph chosen uniformly at random from the family of
all labeled planar graphs with $n$ vertices and $M$ edges.
In the paper we study the component structure of $P(n,M)$.
Combining counting arguments with analytic techniques,
we show that
there are two critical periods in the evolution of
$P(n,M)$. The first one, of width $\Theta(n^{2/3})$, is
analogous to the phase transition observed in
the standard random graph models and
takes place for  $M=n/2+O(n^{2/3})$,
when the largest complex component is formed.
Then, for $M=n+O(n^{3/5})$,
when the complex components cover nearly all vertices, the second critical
period of width $n^{3/5}$ occurs. Starting from that moment increasing
of $M$ mostly affects the density of the complex components,
not  its size.
\end{abstract}


\section{Introduction}
Since the seminal work of Tutte~\cite{censusmaps63} maps
and  graphs on 2-dimensional surfaces have become widely
studied combinatorial objects in discrete mathematics.
The enumerative and structural problems around maps,
i.e.\ \emph{embedded} graphs on a surface, are relatively well settled.
Starting from the number of planar maps computed
by Tutte~\cite{Tutte62,censusmaps63},
the number of rooted maps on surfaces was found by Bender,
Canfield, and Richmond~\cite{BC91} and other classes
of maps were extensively enumerated since then.
Such enumeration results have been used to study typical properties
of random maps on surfaces, e.g.\ the size of the largest components
by Banderier {\it et al.}~\cite{Airy}.

Another important aspect of maps is that they allow nice bijections
to the so-called {\em well-labeled} trees.
The bijection between planar maps and the well-labeled trees
was first studied amongst others by Schaeffer~\cite{Schaeffer-Thesis},
which was extended by Bouttier, Di Francesco, and Guitter ~\cite{BouttierFG}
to maps on an orientable surface with positive genus.
These bijections are the corner stone of profound results
on the topological structure of scaling limits of random maps
by Chassaing and Schaeffer~\cite{ChassaingSchaeffer},
Le Gall~\cite{LeGall},  and  Schramm~\cite{Schramm}.

On the other hand, analogous problems related graphs that are
\emph{embeddable} on a surface are still wide open.
The enumerative properties of random planar graphs have attracted much
attention since the work of McDiarmid, Steger, and Welsh~\cite{MSW}
who studied random  labeled planar graphs with a given number of vertices.

Let $\pl(n)$ be the number of labeled planar graphs on $n$ vertices.
McDiarmid, Steger, and Welsh~\cite{MSW} showed amongst other results
that the quantity $(\pl(n)/n!)^{1/n}$ converges to a limit $\gamma$
as $n \to \infty$, which is called the {growth constant}.
An upper bound $\gamma\le 37.3$, based on the triangulations
and probabilistic methods, was obtained
by Osthus, Pr\"omel, and Taraz~\cite{OPT03}.
A lower bound $\gamma\ge 26.1$ was given by Bender, Gao,
and Wormald~\cite{BenderGaoWormald} who studied the number
of labeled 2-connected planar graphs through the singularity
analysis of generating functions arising from the decomposition
of graphs along connectivity.
Using a similar method Gim{\'e}nez and Noy~\cite{GN}
 proved that $\pl(n)\sim c\ n^{-7/2}\ \gamma^n\  n!$,
where  $c,\gamma >0$ are explicitly computable constants
(e.g.\ $\gamma\sim 27.2$).
As for the number $s_g(n)$ of graphs embeddable
on a surface with positive genus,
McDiarmid~\cite{McDiarmid} showed that its growth constant
 is the same as that of planar graphs while
Chapuy {\it et al.}~\cite{CFGMN} found $s_g(n)$
for all orientable surfaces of positive genus $g$ proving that
$s_g(n) \sim \alpha_g\  n^{5(g-1)/2-1}\  \gamma^n\  n!$,
where $\alpha_g>0$  and $\gamma$
is the growth constant of the labeled planar graphs.
McDiarmid and Reed~\cite{MR} studied various typical properties
of random graphs on surfaces, e.g.\ subgraph containment and maximum degree.

Frieze~\cite{Frieze} asked about the asymptotic  behavior of
the number  $\pl(n,M)$ of labeled planar graphs on $n$
vertices  with $M$ edges. Note first that if $M\le a n$ for some
$a<1/2$, then a typical graph with $n$ vertices and $M$ edges is
planar, i.e.
$\pl(n,M)=(1+o(1))\binom {\binom n2} M$ (cf.\ \cite{LPW} or  \cite{JLR})
Gerke {\it et al.}~\cite{GMSW} proved the existence of
its growth constant, in the sense that for $\pl(n,an)$ with $0\le a\le 3$,
the quantity $(\pl(n,an)/n!)^{1/n}$ converges to a limit $\gamma_a$
as $n \to \infty$. The asymptotic formula for $\pl(n,an)$ was found
by Gim{\'e}nez and Noy~\cite{GN} who showed that for $1<a<3$,
there are analytic constants $c_a, \gamma_a>0$ such that
$\pl(n,an)\sim c_a\ n^{-4}\ \gamma_a^n\  n!$. In this paper
we deal with the case when $a\in [1/2,1)$. Note that for such $a$
$$ \pl(n,M)\le \binom {\binom n2} M= n^{(1+o(1))M},$$
i.e., $\gamma_a=0$, and a `naive' generating function approach
does not lead to the asymptotic formula for $\pl(n,M)$.

We use $\pl(n,M)$ to study the asymptotic behavior of the
uniform random planar graph $P(n,M)$ by which we mean
a graph chosen uniformly
at random among all labeled planar graphs with $n$ vertices and $M$ edges;
thus, each of these graphs occurs as $P(n,M)$ with probability $1/\pl(n,M)$.
From  the results in~\cite{GMSW, GN} it follows
that if $M/n$ is bounded away from both
$1$ and $3$, then
$P(n,M)$ has a well ordered structure, for instance,
it has a large component of size $n-O(1)$, and all
planar graphs of finite size  appear as its subgraphs.
Thus, it corresponds to late stages of the evolution
of the standard uniform random graph $G(n,M)$, the graph chosen
uniformly at random among all graphs with $n$ vertices and $M$ edges.
Our goal is to study  the typical size and structure of components
in $P(n,M)$ in a more interesting range, when $M\le n$.
It turns out that, somewhat surprisingly,
$P(n,M)$ exhibits two critical ranges, which occur at $M=n/2+O(n^{2/3})$
and $M=n+O(n^{3/5})$.

The first critical period corresponds to a
phase transition phenomenon observed in the plethora of
different random graph models. Let us recall
some results on one of the most widely used
random graph model  $G(n,M)$. It follows from the papers of
Erd\H{o}s and R\'enyi \cite{ER60},
Bollob\'as~\cite{Bollobas84}, {\L}uczak~\cite{L},
{\L}uczak, Pittel, and Wierman~\cite{LPW},
Janson {\it et al.}~\cite{JKLP}, and Janson~\cite{J}
(see also Janson, {\L}uczak, and Ruci\'nski~\cite{JLR})
that the giant component (i.e.\ the unique largest component)
suddenly emerges at $M=n/2+O(n^{2/3})$, and nowadays this
spectacular phenomenon is well studied and understood.
If  $M=n/2+s$ and
 $-n\ll s\ll -n^{2/3}$, then, \aas (i.e.\ with probability
tending to 1 as $n$ approaches $\infty$)  $G(n,M)$ consists of
isolated trees and unicyclic components, and
the largest component is a tree
of size $(1+o(1))\frac{n^2}{2s^2} \log \frac{|s|^3}{n^2}$.
On the other hand, if  $n^{2/3}\ll s\ll n$,
then \aas $G(n,M)$ contains exactly one
component with more edges than vertices  of size $(4+o(1))s$,
while all other components are of size $o(n^{2/3})$.
Furthermore, if $s\gg n^{2/3}$, then  \aas $G(n,M)$ contains
a topological copy of $K_{3,3}$ and thus it is not planar,
while, as we have mentioned,  for $s\ll -n^{2/3}$, \aas $G(n,M)$
consists of isolated trees and unicyclic components, so it is
clearly  planar.

Another random structure relevant to the behavior of $P(n,M)$
is the uniform random forest $F(n,M)$ (i.e.\ a forest chosen uniformly
at random among all labeled forest with $n$ vertices and $M$ edges).
{\L}uczak and Pittel~\cite{LP} found that although
the giant component in $F(n,M)$ emerges at $M=n/2+O(n^{2/3})$,
as for $G(n,M)$, the critical behavior of these two
models are somewhat different. Let $M=n/2+s$.
If $s\ll -n^{2/3}$, then the structure of both
$F(n,M)$ and $G(n,M)$ are similar; in particular, the
size of the largest tree in $F(n,M)$ is \aas
$(1+o(1))\frac{n^2}{2s^2} \log \frac{|s|^3}{n^2}$.
However in the supercritical phase, when  $s\gg n^{2/3}$,
the giant tree of $F(n,M)$
is \aas of size $(2+o(1))s$, which is roughly half of the size
of the largest component
of $G(n,M)$,
while  the second largest tree of $F(n,M)$ is of size
$\Theta(n^{2/3})$ which does not depend much on $s$ provided
$s\ll n$, i.e.\ it is by far larger
than the second largest component of $G(n,M)$ for $m=n/2+s$,
which is of size $\Theta(n^2/s^2\log (s^3/n^2))$.

In the paper we show that as far as $M=n/2+s$, where  $s\gg n^{2/3}$
and $s/n$ is bounded away from $1/2$, the behavior of $P(n,M)$
is similar to that of $F(n,M)$. Namely, \aas the size of the largest
complex component is of the order $(2+o(1))s$, while the second
largest component has $\Theta(n^{2/3})$ vertices.
However, unlike in the case of $F(n,M)$ for which $M\le n-1$,
for $P(n,M)$ we may have $M= 3n-6$, so the rate of growth
of the size of complex  components  must change at some point.
We prove that it occurs when $M=n+O(n^{3/5})$, more precisely,
if $M=n+t$ and $t\ll -n^{3/5}$, then the complex components
of $P(n,M)$ have \aas $n-(2+o(1))|t|$ vertices altogether,
while for $n^{3/5}\ll t\ll n^{2/3}$
they contain $n-(\alpha +o(1))(n/t)^{3/2}$  vertices for some computable
constant $\alpha>0$. Let us mention that the condition
$t\ll n^{2/3}$ is a  result of the proof method we have used
and most likely can be replaced by $t\ll n$.
Furthermore, our method can say quite a lot about structure
of the largest component very much in the spirit of {\L}uczak~\cite{cycle}.

The rest of the paper is organized as follows.
In the next section we describe the main idea of our argument.
Then, in Section~\ref{sec:cubicplanar}
we present the first analytic ingredient of the proof: counting
specially weighted cubic planar multigraphs using generating functions.
Here we also describe how to use this result to
bound the number of planar multigraphs
with minimum degree three. Then, in the next section, we
estimate the number of planar graph with $k$ vertices
and $k+\ell$ edges in which each component has more
edges than vertices. Finally, in the main chapter
of this paper, we use a direct  counting  to
study the number $\pl(n,M)$ and the asymptotic properties
of $P(n,M)$ for different values of $M$.

\section{Idea of the proof}

As we have already mentioned most of results concerning the
asymptotic behavior of $\pl(n,M)$ is based on the generating
function method. Thus, for $a\in (0,3)$, one can study the function
$$f_a(x)=\sum_{n}\frac{\pl(n,an)}{n!}x^n\,$$
and deduce the asymptotic behavior of $\pl(n,an)$ from
the behavior of $f_a(x)$ near its singularities.
Note however that since
$$\pl(n,an)\le \binom {\binom n2}{an}\le n^{(a+o(1))n}\,,$$
for $a<1$, the coefficients in the expansion of $f_a(x)$ tend
to zero too fast to be handled by standard methods of
generating function analysis. On the other hand, the condition
that a graph is planar is very hard to grasp by purely
combinatorial means. Thus, in the paper we use a combination of
analytic and combinatorial tools. From a planar
graph we extract its kernel,
which is its only part responsible for the planarity and is
dense enough to be treated by generating functions method.
Then we use a technically  involved but rather natural counting
argument to find asymptotic properties of $P(n,M)$.

In order to make the above description precise,
we introduce some definition. The {\em excess} $\ex(G)$ of a
graph $G$ is the difference between the number of its
edges and the number of its vertices.
We call components of a graph with positive excess, i.e.\
those which have at least two cycles, {\em complex} components,
and we say that the graph is complex if all its
components are complex.
The core of a graph $G$, denoted $\core(G)$,  is
the maximal subgraph of $G$ with the minimum degree two.
The kernel of $G$, denoted by $\ker(G)$,
is obtained from the core by removing all isolated cycles
and replacing each path
whose internal vertices are all of degree two by an edge.
Note that for a graph $G$ the kernel is a multigraph which
can have  multiple edges and loops.
However, the excess of both $\core(G)$ and $\ker(G)$ are the same.
Note also that $\ker(G)$ has clearly the minimum degree three.
If $\ker(G)$  is cubic, we say that $G$ is {\em clean}.
We define the {\em deficiency} $\dd(G)$ of $G$ as
the sum of degrees of vertices of $\ker(G)$
minus three times the number of vertices of $\ker(G)$.
Therefore, a graph is clean if and only if its deficiency is zero.

Our argument is based on a simple observation that
$G$ is planar if and only if $\ker(G)$ is planar.
We use analytic methods to count the number
of possible candidates for the kernel, and then follow
purely combinatorial argument.
To this end, in Section~\ref{sec:cubicplanar}
we apply the singularity analysis to extract
the asymptotic number of possible
clean kernels of the planar graphs (Theorem~\ref{thm:cubic-asymptotics}).
The proof is similar to that used in the case
of counting cubic planar graphs presented
in Bodirsky {\it et al.}~\cite{BKLM}, but in order to
find the number of all planar graphs,
we need to count cubic planar multigraphs with some
special weight function dependent on the number of loops
and multiple edges of a multigraph.
Then, we use a simple combinatorial idea to generalize our estimates
to the number of kernels with non-zero deficiency.

In the remaining part of our argument we follow
an idea from {\L}uczak~\cite{cycle}
(see also Janson, {\L}uczak, and Ruci\'nski~\cite{JLR})
and construct  complex  planar graphs $G$  from their kernels.
Thus, we first choose the kernel, then put on its edges
vertices of degree two obtaining the core of the graph,
and add to it a forest rooted on vertices of the core.
This procedure lead to the estimate given in Theorem~\ref{thm:complex}.
We also remark that such a typical complex planar graph
on $k$ vertices consists of large component and,
perhaps, some small components of combined excess $O(1)$.

Finally, in the main part of the paper we count
the number of planar graphs by splitting
it into a complex planar graph, the number of
which we have just found, and the part which consists
of isolated trees and unicyclic components,
whose number is well known (see Britikov~\cite{Britikov}).
We compute $\pl(n,M)$ for different values of $M=M(n)\le n+o(n)$.
At the same time, we get information on the typical structure of $P(n,M)$
such as the size of the largest component, its excess, and core.

\section{Cubic planar graphs}\label{sec:cubicplanar}

In this section we study the family of cubic planar weighted
multigraphs, which plays a crucial role in studying
the kernel of a random planar graph.
We then consider the family of `supercubic' planar graphs
that are planar weighted multigraphs with minimum degree
at least three and a positive deficiency, which is indeed
the set of possible kernels of complex planar graphs.

\subsection{Cubic planar weighted multigraphs}\label{subsec:cubicplanar}
In this section we count the number of all labeled cubic planar weighted
multigraphs, where each multigraph with $f_1$ loops,
$f_2$ double edges, and $f_3$ triple edges gets
weight $2^{-f_1-f_2}6^{-f_3}$.
For $k=0,1$, let $g^{(k)}_n$ be the number of  all labeled $k$-vertex
connected cubic planar weighted multigraphs on $n$ vertices
and $G^{(k)}(x)$ be the corresponding exponential generating
function defined by
\begin{equation*}
G^{(k)}(x)  := \sum_{n \ge 0}\frac{g^{(k)}_{n}}{n!}\, x^n.
\end{equation*}
Note that $g^{(k)}_n=0$ for odd $n$ and also for $n=0$ except that we set
$g_0^{(0)}=1$ by convention. It is well-known (e.g.\ \cite{GN,HP}) that
\begin{equation*}
G^{(0)}(x) =  \exp(G^{(1)}(x)).
\end{equation*}

The function $G^{(1)}(x)$ is defined by the following system of equations:
\begin{equation}\label{eq:genfunctions}
\begin{aligned}
3x\, \frac{dG^{(1)}(x)}{dx} &=   D(x)+S(x)+(P(x)-x^2/4+x^2/12)+H(x)\\
&\hspace{3truecm}+(B(x)-x^2/4+x^2/8)\\
 &=    D(x)+C(x)-7x^2/24\,\\
B(x) &=  x^2(D(x)+C(x))/2+x^2/4\\
C(x) &=  S(x)+P(x)+H(x)+B(x)\\
D(x) &=   B(x)^2/x^2-x^2/16\\
S(x) &=  C(x)^2 - C(x)S(x)\\
P(x) &=   x^2 C(x)+ x^2 C(x)^2/2+x^2/4\\
2(C(x)+1)H(x) &=  {u(1-2u) - u(1-u)^3} \\
x^2 (C(x)+1)^3 &=  u(1-u)^3.
\end{aligned}
\end{equation}
This system of equations is obtained by following the lines of
Sections 3--6 in Bodirsky {\it et al.}~\cite{BKLM}, where cubic planar \emph{simple}
graphs were studied, so below we just outline the argument.

The starting idea is that given a connected cubic planar weighted
multigraph $G$, we select an arbitrary edge $e$ in $G$
and orient the edge $e$, to obtain a \emph{rooted} counterpart $\hat G$.
More precisely, the {rooted cubic graph} $\hat G=(V, E, st)$
obtained from a connected cubic multigraph $G=(V, E)$
consists of $G=(V, E)$ and an ordered pair of adjacent vertices $s$ and $t$.
The oriented edge $st$ is called the {root} of $\hat G$.
Denote by $G^-$  a graph obtained from $\hat G$
by deleting the root of $\hat G$. We have the following lemma
analogous to Lemma 1 in \cite{BKLM}.

\begin{lemma}\label{lem:structure}
A rooted cubic graph $\hat G=(V,E,st)$ has exactly
one of the following types.
\begin{itemize}
\item $b$: the root is a self-loop.
\item $d$: $G^{-}$ is disconnected.
\item $s$: $G^{-}$ is connected, but there is a cut edge
in $G^{-}$ that separates $s$ and $t$.
\item $p$: $G^{-}$ is connected, there is no cut edge in $G^{-}$
 separating $s$ and $t$, and either $st$ is an edge of $G^{-}$,
 or $G \setminus \{s,t\}$ is disconnected.
\item $h$: $G^{-}$ is connected, there is no cut-edge in $G^{-}$
 separating $s$ and $t$, $G$ is simple, and $G \setminus \{s,t\}$
 is connected.
\end{itemize}
\end{lemma}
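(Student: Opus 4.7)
The plan is to verify the five-case classification of Lemma~\ref{lem:structure} by a direct decision-tree argument, modelled on the proof of Lemma~1 in~\cite{BKLM} for simple cubic graphs and adapted to the multigraph setting in which loops and parallel edges are allowed.

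For mutual exclusivity I would read off the defining conditions of the five types. Type $b$ is the unique case with the root a self-loop (so $s=t$); the remaining types assume $s\neq t$. Given $s\neq t$, type $d$ is distinguished by $G^-$ being disconnected, whereas $s$, $p$, $h$ all require $G^-$ connected. Type $s$ alone requires a cut edge of $G^-$ separating $s$ and $t$. Finally, $p$ and $h$ are separated by mutually incompatible conditions: in $p$, either $st\in E(G^-)$ (a parallel root edge) or $G\setminus\{s,t\}$ is disconnected, while in $h$ neither holds and $G$ is additionally simple. These conditions are visibly pairwise incompatible.

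For exhaustiveness I would run the decision procedure suggested by those conditions on an arbitrary rooted cubic multigraph $\hat G=(V,E,st)$: first test the root for a self-loop (giving $b$); otherwise test $G^-$ for connectedness (giving $d$); otherwise search for a cut edge of $G^-$ that lies on every $s$--$t$ path (giving $s$); otherwise test whether $st\in E(G^-)$ or $G\setminus\{s,t\}$ is disconnected (giving $p$); and classify the remainder as $h$.

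The main obstacle lies in the last step, where I must check that what is left over truly satisfies the simplicity and connectedness demands of type $h$. The cubic structure does the work here: a self-loop at a vertex $v$ consumes two of its three incidences and a parallel pair of edges between $u$ and $v$ consumes four of the six incidences of $u$ and $v$, so each such local anomaly can only sit in a very restricted position relative to the root. I would enumerate those positions and confirm that any loop or multi-edge away from $\{s,t\}$ either disconnects $G^-$, creates a separating cut edge in $G^-$, or forces $\{s,t\}$ into a vertex cut of $G$, and so has already been absorbed in an earlier branch of the decision tree; loops or parallel edges incident to the root itself are absorbed by type $b$ or the first clause of type $p$. Once this cubicity-aware bookkeeping is complete, the five types exhaustively cover all rooted cubic multigraphs and the lemma follows.
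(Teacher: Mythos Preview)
The paper does not actually prove this lemma: it is stated without proof, with a pointer to Lemma~1 of \cite{BKLM} for the simple-graph case. So there is no ``paper's own proof'' to compare against beyond that reference.

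Your decision-tree argument is the right shape, and mutual exclusivity is fine. The problem is the last step, where you assert that cubicity forces any residual graph (not of type $b$, $d$, $s$, $p$) to be simple. This is not true. Take $V=\{s,t,a,b,c,v\}$ with edges
\[
st\ (\text{root}),\ sa,\ sb,\ ta,\ tb,\ ac,\ bc,\ cv,\ \text{and a loop at }v.
\]
Every vertex has degree $3$, and the graph is planar and connected. Here $G^{-}$ is connected; the only cut edge of $G^{-}$ is $cv$, which separates $\{v\}$ from the rest and hence does \emph{not} separate $s$ and $t$; there is no second copy of $st$; and $G\setminus\{s,t\}$ (on $\{a,b,c,v\}$ with edges $ac,bc,cv$ and the loop) is connected. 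So the graph is not of type $b$, $d$, $s$, or $p$, yet it is not simple, so it is not of type $h$ as stated either. Your claimed trichotomy ``a loop or multi-edge away from $\{s,t\}$ either disconnects $G^{-}$, creates a separating cut edge in $G^{-}$, or forces $\{s,t\}$ into a vertex cut of $G$'' fails on this example: the loop at $v$ does none of these.

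What this really exposes is that the clause ``$G$ is simple'' in the description of type~$h$ is carried over verbatim from \cite{BKLM}, where it is automatic, and is too strong in the multigraph setting. The decomposition underlying the system~(\ref{eq:genfunctions}) treats $h$-graphs as those whose $3$-connected core contains $s$ and $t$; non-simplicities such as the loop at $v$ above are then absorbed \emph{recursively} when the edges of that core are replaced by $C$-networks (in the example, the virtual edge $ab$ of the $K_4$-core is replaced by an $s$-type network containing $c$ and $v$). If you want a correct case analysis at the level of $\hat G$ itself, you should drop ``$G$ is simple'' from the definition of $h$ and instead characterise $h$ by the remaining conditions; your proof then goes through without needing the simplicity bookkeeping at all.
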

\begin{figure}[h]
\begin{center}
\begin{picture}(0,0)%
\includegraphics{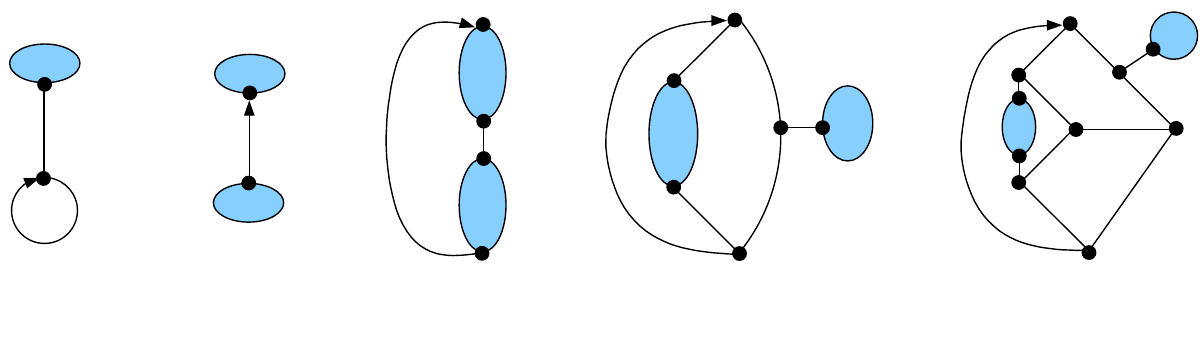}%
\end{picture}%
\setlength{\unitlength}{3522sp}%
\begingroup\makeatletter\ifx\SetFigFont\undefined%
\gdef\SetFigFont#1#2#3#4#5{%
  \reset@font\fontsize{#1}{#2pt}%
  \fontfamily{#3}\fontseries{#4}\fontshape{#5}%
  \selectfont}%
\fi\endgroup%
\begin{picture}(6450,1820)(-14,-1115)
\put(1393,-249){\makebox(0,0)[lb]{\smash{{\SetFigFont{8}{9.6}{\familydefault}{\mddefault}{\updefault}{\color[rgb]{0,0,0}$s$}%
}}}}
\put(1398,105){\makebox(0,0)[lb]{\smash{{\SetFigFont{8}{9.6}{\familydefault}{\mddefault}{\updefault}{\color[rgb]{0,0,0}$t$}%
}}}}
\put(1081,-1051){\makebox(0,0)[lb]{\smash{{\SetFigFont{8}{9.6}{\rmdefault}{\mddefault}{\updefault}{\color[rgb]{0,0,0}$d$-graph}%
}}}}
\put(301,-215){\makebox(0,0)[lb]{\smash{{\SetFigFont{8}{9.6}{\familydefault}{\mddefault}{\updefault}{\color[rgb]{0,0,0}$s=t$}%
}}}}
\put(2678,554){\makebox(0,0)[lb]{\smash{{\SetFigFont{8}{9.6}{\familydefault}{\mddefault}{\updefault}{\color[rgb]{0,0,0}$t$}%
}}}}
\put(2674,-698){\makebox(0,0)[lb]{\smash{{\SetFigFont{8}{9.6}{\familydefault}{\mddefault}{\updefault}{\color[rgb]{0,0,0}$s$}%
}}}}
\put(2251,-1051){\makebox(0,0)[lb]{\smash{{\SetFigFont{8}{9.6}{\rmdefault}{\mddefault}{\updefault}{\color[rgb]{0,0,0}$s$-graph}%
}}}}
\put(4050,-703){\makebox(0,0)[lb]{\smash{{\SetFigFont{8}{9.6}{\familydefault}{\mddefault}{\updefault}{\color[rgb]{0,0,0}$s$}%
}}}}
\put(4044,582){\makebox(0,0)[lb]{\smash{{\SetFigFont{8}{9.6}{\familydefault}{\mddefault}{\updefault}{\color[rgb]{0,0,0}$t$}%
}}}}
\put(5934,-703){\makebox(0,0)[lb]{\smash{{\SetFigFont{8}{9.6}{\familydefault}{\mddefault}{\updefault}{\color[rgb]{0,0,0}$s$}%
}}}}
\put(5826,563){\makebox(0,0)[lb]{\smash{{\SetFigFont{8}{9.6}{\familydefault}{\mddefault}{\updefault}{\color[rgb]{0,0,0}$t$}%
}}}}
\put(  1,-1051){\makebox(0,0)[lb]{\smash{{\SetFigFont{8}{9.6}{\rmdefault}{\mddefault}{\updefault}{\color[rgb]{0,0,0}$b$-graph}%
}}}}
\put(3781,-1051){\makebox(0,0)[lb]{\smash{{\SetFigFont{8}{9.6}{\rmdefault}{\mddefault}{\updefault}{\color[rgb]{0,0,0}$p$-graph}%
}}}}
\put(5626,-1051){\makebox(0,0)[lb]{\smash{{\SetFigFont{8}{9.6}{\familydefault}{\mddefault}{\updefault}{\color[rgb]{0,0,0}$h$-graph}%
}}}}
\end{picture}%
\end{center}
\caption{The five types of rooted cubic graphs in Lemma~\ref{lem:structure}
\label{fig:all_types}}
\end{figure}
The generating function for the family $\mathcal B$ of $b$-graphs
is denoted by $B(x)$. The other generating functions
in (\ref{eq:genfunctions}) are analogously defined according
to their corresponding types.
Furthermore, the above system of equations follows
from decomposition of graphs.

Note that the number of labeled connected cubic planar weighted
multigraphs with one distinguished oriented edge is counted by
$3x\, \frac{dG^{(1)}(x)}{dx}$.

The difference between the system of equations above
and that in Bodirsky {\it et al.}~\cite{BKLM} arises as follows.
In \cite{BKLM} the term $(B(x)-x^2/4+x^2/8))$ does not appear
in $3x\, \frac{dG^{(1)}(x)}{dx}$.
The reason is that each graph in $\mathcal B$ enumerated by $B(x)$
works merely as a building block in $\hat G$ when considering simple graphs,
while it may also appear as a connected component
(whose root edge is a loop) when considering multigraphs.
Note however that when considering weights, a graph on 2 vertices in
$\mathcal B$ (that is, a rooted "dumbbell" consisting of two vertices,
an edge, and two loops one of which is marked as a root) gets
a weight 1/2 when it is used as a building block
(one loop disappears in the building operation),
but 1/4 when used as an isolated component (due to two loops).
In a similar way, a graph on 2 vertices in the family $\mathcal P$
enumerated by $P(x)$ (which is indeed a triple edge, one of which
is oriented) gets a weight 1/2 when it is used as a building block,
but 1/6 when used as an isolated component.

Analogous to Section 6 in \cite{BKLM}, one can use the singularity analysis
to obtain the following asymptotic estimates.
\begin{theorem}\label{thm:cubic-asymptotics}
For $n$ even,
\begin{align}
g^{(0)}_n  =\;  &  (1 + O(n^{-1}))\ g\ n^{-7/2}\
\rho^{-n}\ n!\,,\label{eq:generalcubic}\\
g^{(1)}_n =\;  & (1 + O(n^{-1}))\ g_c\ n^{-7/2}\
\rho^{-n}\ n!\,,\label{eq:concubic}
\end{align}
where all constants are analytically given, $\rho$ is the dominant
singularity of $G^{(1)}(x)$, and $g_c/g =e^{-G^{(1)}(\rho)}$.
Furthermore, $g_n^{(0)}=g_n^{(1)}=0$ for odd $n$.
\end{theorem}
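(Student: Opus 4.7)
The plan is to apply Flajolet--Odlyzko singularity analysis to the system~(\ref{eq:genfunctions}), paralleling Section~6 of Bodirsky \emph{et al.}~\cite{BKLM} but carrying along the loop/multi-edge weights $2^{-f_1-f_2}6^{-f_3}$. The parity claim $g^{(0)}_n=g^{(1)}_n=0$ for odd $n$ is immediate from the handshake lemma: a cubic multigraph has $3n/2$ edges, so $n$ must be even.

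For the asymptotic estimates, I would first eliminate variables in~(\ref{eq:genfunctions}) to obtain a single equation for $C(x)$, and observe that the crucial parametric relation is the last one, $x^2(C(x)+1)^3 = u(1-u)^3$. The map $u\mapsto u(1-u)^3$ has a critical point in $(0,1)$: differentiating gives $(1-u)^3-3u(1-u)^2=(1-u)^2(1-4u)$, so $u_0=1/4$ is the branch point, and $\rho$ is defined by $\rho^2(C(\rho)+1)^3=u_0(1-u_0)^3=27/256$ combined with the value of $C(\rho)$ recovered from the remaining algebraic relations. Near $\rho$ the implicit function theorem fails at $u_0$, but the standard square-root unfolding around a quadratic critical value yields
\[
C(x) = C(\rho) + c_1\sqrt{1-x/\rho} + c_2(1-x/\rho) + c_3(1-x/\rho)^{3/2} + \dots ,
\]
and the companion functions $B,S,P,H,D$ inherit the same square-root singularity since they are algebraic in $C(x)$ and $x$.

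Next I would integrate. From the first equation of~(\ref{eq:genfunctions}), $3xG^{(1)}{}'(x) = D(x)+C(x)-7x^2/24$, so $G^{(1)}{}'(x)$ has the same square-root singularity as $C(x)$. Integrating a function with expansion $a_0+a_1(1-x/\rho)^{1/2}+\dots$ produces an expansion of the form
\[
G^{(1)}(x)=G^{(1)}(\rho)+\alpha_1(1-x/\rho)+\alpha_{3/2}(1-x/\rho)^{3/2}+\dots ,
\]
and the singular term $(1-x/\rho)^{3/2}$ is what drives the coefficient asymptotics. Applying the transfer theorem to this singular part, multiplying by $n!$ (since $G^{(1)}$ is exponential) and using $[x^n](1-x/\rho)^{3/2}\sim \tfrac{3}{4\sqrt{\pi}}\rho^{-n}n^{-5/2}$ gives the estimate~(\ref{eq:concubic}) with the explicit constant $g_c$ read off from $\alpha_{3/2}$. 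For~(\ref{eq:generalcubic}), I would use $G^{(0)}(x)=\exp(G^{(1)}(x))$: since the non-singular part of $G^{(1)}$ is analytic at $\rho$, one obtains
\[
G^{(0)}(x) = e^{G^{(1)}(\rho)}\bigl(1+\alpha_{3/2}(1-x/\rho)^{3/2}+\dots\bigr),
\]
so that the dominant singular term is multiplied by $e^{G^{(1)}(\rho)}$, which immediately yields $g=e^{G^{(1)}(\rho)}g_c$, i.e.\ $g_c/g=e^{-G^{(1)}(\rho)}$.

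The main obstacle will be the careful bookkeeping needed to verify that the weighted system still produces a genuine square-root singularity at a unique dominant $\rho$ on the circle of convergence, rather than some degenerate behavior caused by the modified contributions $B(x)-x^2/4+x^2/8$ and $P(x)-x^2/4+x^2/12$ (which encode the different weights of the $b$- and $p$-graphs when they occur as whole connected components versus building blocks). Uniqueness of the dominant singularity amounts to an aperiodicity argument for the span of the exponents appearing in $G^{(1)}$; apart from the trivial parity (handled separately by the substitution $x\mapsto x^2$), this is established in the same way as in~\cite{BKLM}. Once the singular behavior of $C(x)$ is pinned down, the remainder of the argument is a routine invocation of the transfer theorem with the error term $O(n^{-1})$ coming from the next-order terms in the singular expansion.
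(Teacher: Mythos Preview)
Your plan---transfer-theorem singularity analysis of the system~(\ref{eq:genfunctions}) along the lines of Section~6 of~\cite{BKLM}---is exactly what the paper invokes (indeed the paper offers no argument beyond citing that section). Your treatment of the parity claim and of the passage $G^{(0)}=\exp(G^{(1)})$, in particular the derivation $g_c/g=e^{-G^{(1)}(\rho)}$, is correct.

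There is, however, a concrete mismatch in the singular exponent. From your expansion
\[
G^{(1)}(x)=G^{(1)}(\rho)+\alpha_1(1-x/\rho)+\alpha_{3/2}(1-x/\rho)^{3/2}+\dots
\]
you correctly extract $[x^n]G^{(1)}\sim c\,n^{-5/2}\rho^{-n}$; but~(\ref{eq:concubic}) asserts $n^{-7/2}$. What is actually needed is a leading singular term of order $(1-x/\rho)^{5/2}$ in $G^{(1)}$, i.e.\ of order $(1-x/\rho)^{3/2}$ in $3x\,dG^{(1)}/dx=D(x)+C(x)-7x^2/24$, not the $(1-x/\rho)^{1/2}$ that your ``standard square-root unfolding'' at $u_0=1/4$ would naively produce. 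In~\cite{BKLM} the correct exponent emerges only after one solves the full algebraic system for $B,C,D,S,P,H$ jointly, parametrises everything explicitly by $u$, and verifies that in the relevant combination the half-integer term one order below drops out; this cancellation is exactly what separates the planar-\emph{graph} exponent $-7/2$ from the planar-\emph{map} exponent $-5/2$. You flag this ``careful bookkeeping'' as the main obstacle and then call the remainder a routine invocation of the transfer theorem, but the bookkeeping is precisely where the exponent is decided, and as written your argument delivers the wrong power of~$n$.
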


We note that the first digits of $\gamma:=\rho^{-1}$ are  $3.38$, while
the growth constant for the labeled \emph{simple}
cubic graphs is close to $3.13$~\cite{BKLM}.
The difference in growth is due to the fact that,
unlike in the non-planar case, in average the cubic planar graphs
contain large number of multiple edges and loops.

\smallskip

Let $G_n\in \mathcal G^{(0)}_n$ denote a random graph chosen uniformly
at random from the family $\mathcal G^{(0)}_n$of all labeled cubic planar
weighted multigraphs on $n$ vertices, where each multigraph
with $f_1$ loops, $f_2$ double edges,
and $f_3$ triple edges gets weight $2^{-f_1-f_2}6^{-f_3}$.
Using the asymptotic estimation of $g^{(0)}_n$, $g^{(1)}_n$, we obtain the
following results on the size $L_1(G_n)$ of the largest component of $G_n$.
\begin{lemma}\label{lem:cubic-giant}
\begin{enumerate}
\item[(a)] Uniformly over $n=1,2,\dots$, and $0 \leq j < n/2$,
\begin{equation} \label{eqn.frag1}
   \pr( L_1(G_n) = n-j)  =
      (1+O(1/n))\ {g_c}\ (1-j/n)^{-7/2}\ j^{-7/2}.
\end{equation}
\item[(b)] There are constants $C$ and $n_0$ such that for all
          integers $n\ge n_0$ and $j=1,2,\dots$
\begin{equation} \label{eqn.frag2}
  \pr(L_1(G_n)  \le n-j) \leq C j^{-5/2}.
\end{equation}
\end{enumerate}
\end{lemma}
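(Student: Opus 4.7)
The starting point is an exact combinatorial identity valid for $0 \le j < n/2$: since the largest component is unique in this range, one forms every cubic planar weighted multigraph with $L_1 = n-j$ by choosing its largest component's vertex set ($\binom{n}{n-j}$ ways), forming a connected cubic planar weighted multigraph on those vertices ($g^{(1)}_{n-j}$ choices), and forming an arbitrary cubic planar weighted multigraph on the remaining $j$ vertices ($g^{(0)}_j$ choices). The constraint that no other component exceed size $n-j$ is automatic since the remaining components sit on only $j < n-j$ vertices, and because the weight function is multiplicative over components the product of weights equals the weight of the combined graph. This yields
\begin{equation*}
\pr(L_1(G_n) = n-j) \;=\; \binom{n}{j}\,\frac{g^{(1)}_{n-j}\, g^{(0)}_j}{g^{(0)}_n}.
\end{equation*}

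For part~(a) I would substitute the three asymptotic estimates from Theorem~\ref{thm:cubic-asymptotics}, each carrying relative error $O(1/n)$. The exponential factors $\rho^{-m}$ and the leading constant $g$ cancel, the factorials $j!\,(n-j)!$ invert $\binom{n}{j}$, and the remaining polynomial factor collapses to
\begin{equation*}
g_c\,\frac{n^{7/2}}{(n-j)^{7/2}\, j^{7/2}} \;=\; g_c\,(1-j/n)^{-7/2}\, j^{-7/2},
\end{equation*}
giving the claim. Uniformity of the $O(1/n)$ error on $0 \le j < n/2$ holds because both $n$ and $n-j$ are at least $n/2$; the finitely many very small $j$ for which the asymptotic form of $g^{(0)}_j$ is still crude can be treated by substituting exact values of $g^{(0)}_j$, which only affects the multiplicative constants.

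For part~(b), the plan is to sum the estimate from part~(a). On the range $i \in [j,\,n/2)$ we have $(1-i/n)^{-7/2} \le 2^{7/2}$, so
\begin{equation*}
\pr\bigl(n/2 < L_1(G_n) \le n-j\bigr) \;=\; \sum_{i=j}^{\lfloor n/2\rfloor - 1} \pr(L_1 = n-i) \;\le\; C_1 \sum_{i \ge j} i^{-7/2} \;\le\; C_2\, j^{-5/2}.
\end{equation*}
The residual event $\{L_1 \le n/2\}$ is then handled separately. Writing $\pr(L_1 \le n/2) = 1 - \sum_{k > n/2} E_k$, where $E_k = \binom{n}{k}\,g^{(1)}_k\, g^{(0)}_{n-k}/g^{(0)}_n$ is the expected number of components of size $k$ (equal to $\pr(L_1 = k)$ for $k > n/2$), the asymptotic expansion using Theorem~\ref{thm:cubic-asymptotics} shows that the leading constant of $\sum_{k>n/2}E_k$ equals~$1$ by the probabilistic normalisation, with a remainder of order $n^{-5/2}$. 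Thus $\pr(L_1 \le n/2) \le C_3\,n^{-5/2}$, which is absorbed into $C\, j^{-5/2}$ whenever $j \le n/2$; for $j > n/2$ the bound is trivial by monotonicity.

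The main obstacle will be establishing the $O(n^{-5/2})$ rate for $\pr(L_1(G_n) \le n/2)$. The naive bound $\pr(L_1 \le n/2) \le \pr(|C_{v_1}| \le n/2)$, where $C_{v_1}$ is the component of vertex~$1$, only yields $O(1/n)$, because it does not exploit that \emph{every} vertex must lie in a small component, not just vertex~$1$. To reach the sharper rate one must either push the singularity expansion of Theorem~\ref{thm:cubic-asymptotics} one order further and identify a cancellation of the $O(1/n)$ coefficients, or perform singularity analysis directly on the truncated series $\exp\bigl(\sum_{k \le n/2} g^{(1)}_k\,x^k/k!\bigr)$; both approaches are standard in analytic combinatorics but require careful tracking of the error terms.
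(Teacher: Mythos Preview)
Your argument for part~(a), and for the sum over $i\in[j,n/2)$ in part~(b), is exactly the paper's.

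The divergence is in the bound on $\pr(L_1(G_n)\le n/2)$. You propose to extract the $O(n^{-5/2})$ rate from the complement $1-\sum_{k>n/2}E_k$, which---as you yourself flag---would require either a second-order term in Theorem~\ref{thm:cubic-asymptotics} or a fresh singularity analysis of a truncated exponential; neither is carried out, so this step of your proposal is a plan rather than a proof. The paper sidesteps the obstacle entirely with an elementary counting argument. If every component of $G$ has at most $n/2$ vertices, one can greedily split the components into two blocks $V_1,V_2$ with $|V_1|,|V_2|=\Theta(n)$ and no edges between them, whence
\begin{align*}
|\{G\in\mathcal G^{(0)}_n : L_1(G)\le n/2\}|
&\;\le\; \sum_{n/3\le i\le n/2}\binom{n}{i}\,g^{(0)}_i\,g^{(0)}_{n-i}\\
&\;=\; O\!\Big(\rho^{-n}n!\sum_{n/3\le i\le n/2} i^{-7/2}(n-i)^{-7/2}\Big)
\;=\; O\big(n^{-5/2}\,g^{(0)}_n\big).
\end{align*}
This uses only the first-order upper bound $g^{(0)}_m\le 2g\,m^{-7/2}\rho^{-m}m!$ already supplied by Theorem~\ref{thm:cubic-asymptotics}, and involves no cancellation at all. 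Your analytic routes would work in principle, but the partition trick is both shorter and requires no new input.
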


\begin{proof}
(a) If $L_1(G_n) = n-j$, where $j < n/2$,  then the graph obtained
from $G_n$ by deleting the largest component
(which is a connected cubic planar weighted multigraph on $n-j$ vertices)
is an arbitrary cubic planar weighted multigraphs on $j$ vertices.
By Theorem~\ref{thm:cubic-asymptotics} we have
\begin{align*}
\pr(L_1(G_n) = n-j)
=\;&  {n \choose j}\ \frac{g^{(1)}_{n-j} \times g^{(0)}_{j}}{g^{(0)}_{n}} \\
=\;& (1+O(1/n))\ {n \choose j}\ \frac{g_c\ (n-j)^{-7/2}\ \rho^{-(n-j)}\ (n-j)!
\times  g\ j^{-7/2}\ \rho^{-j}\ j!  }{g\ n^{-7/2}\ \rho^{-n}\ n! }\\
=\;&  (1+O(1/n))\ {g_c} (1-j/n)^{-7/2}\ j^{-7/2}.
\end{align*}

(b) We let  $R(G_n)$ denote $n-L_1(G_n)$, the number of vertices outside
of the largest component.
The part (a) implies that there is a constant $C'$ such that
for each $n$ and each $1 \leq j < n/2$,
\begin{equation*} \label{eqn.frag3}
  \pr(R(G_n) = j) \leq C'  j^{-7/2},
\end{equation*}
and hence
\begin{equation*}
\pr(j \leq R(G_n) < n/2) \leq   C' \sum_{j \leq i < n/2} j^{-7/2}
\leq 2C' j^{-5/2}.
\end{equation*}
Now it suffices to show that $\pr(L_1(G_n) \leq n/2) = O(n^{-5/2})$.
Let $G \in \mathcal G^{(0)}_n $ with $L_1(G) \leq n/2$.
Then, the vertex set of $G$ can be partitioned into
two sets $V_1$, $V_2$, such that $n/3 \leq |V_1|, |V_2| \leq 2n/3$,
$|V_1|+ |V_2|=n$, and there is no vertex between $V_1$ and $V_2$.
Hence,  since for all  sufficiently large $n$ we have
$ 2^{-1}  g n^{-7/2}\ \rho^{-n}\ n! \leq |g^{(0)}_n |
\leq  2 g n^{-7/2}\ \rho^{-n}\ n! $,  for large  $n$ we get
\begin{align*}
  |\{G \in \mathcal G^{(0)}_n :  L_1(G) \leq n/2 \}|
  & \leq\;
  \sum_{n/3 \leq i \leq n/2}  {n \choose i}  g^{(0)}_i \  g^{(0)} _{n-i} \\
  & \leq\;  4 g^2 \rho^{-n}\ n!
  \sum_{n/3 \leq i \leq n/2} i^{-7/2}  \ (n-i)^{-7/2}\\
& = \;  O(n^{-5/2} g^{(0)}_n)\,.
\end{align*}
This completes the proof of part (b).
\end{proof}

\subsection{Shrinking}
Let $\Q(n;d)$ denote the family of labeled planar multigraphs $G$
on vertex set $[n]=\{1,2,\dots,n\}$  with $(3n+d)/2$ edges which
have minimum degree at least three.
Therefore the deficiency $\dd(G)$ of $G\in \Q(n;d)$ equals $d$.
Moreover, let
$$\q(n;d)=\sum_{G\in \Q(n;d)}2^{-f_1(G)}\prod_{i\ge 2}(i!)^{-f_i(G)},$$
where $f_1(G)$ counts loops in $G$, and $f_i(G)$
stands for the number of  edges with parity $i$ for each $i\ge 2$.
Since each $G\in \Q(n;0)$ is cubic, the asymptotic behavior
of $\q(n;0)$ is determined by (\ref{eq:generalcubic}).

The following lemma gives bounds for $\q(n;d)$ for $d\ge 1$.

\begin{lemma}\label{lem:cubic-deficiency}
Let $1\le d\le n$ and $n>0$ be integers such that $3n+d$ is even.
Then we have
\begin{equation}\label{eq:deficiency}
\frac{\q(n+d;0)}{d! 6^{2d}}
\le\q(n;d)
\le \frac{\q(n+d;0)9^{d}}{d!}
\end{equation}
where $\q(n;0)$ is given by (\ref{eq:generalcubic}).
\end{lemma}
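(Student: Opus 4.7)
The plan is to relate $\Q(n;d)$ to $\Q(n+d;0)$ via iterated local operations that trade degree excess for extra vertices. The fundamental step is a \emph{split}: given a vertex $v$ of $G$ with $\deg(v)=k\ge 4$, introduce a new vertex $v'$, transfer exactly two of the edges incident to $v$ over to $v'$, and add a fresh edge $vv'$. The resulting multigraph has one more vertex, one more edge, $\deg(v')=3$, $\deg(v)=k-1$, and deficiency reduced by one. Since $G$ is planar, the split can always be performed planarly by cutting a small neighbourhood of $v$ between two consecutive half-edges of a fixed planar embedding; the inverse operation---contracting an edge incident to a degree-$3$ vertex---automatically preserves planarity and keeps the minimum degree at least three.

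For the upper bound $\q(n;d)\le \q(n+d;0)\,9^{d}/d!$, I would consider the set of pairs $(G,\sigma)$ with $G\in\Q(n;d)$ and $\sigma$ an ordered sequence of $d$ planarity-preserving minimal splits applied to $G$, labelling the $j$-th newly created vertex by $n+j$. Each such $\sigma$ terminates in some $H\in\Q(n+d;0)$. From below, each $G$ admits at least $d!$ such sequences, since the $d$ insertions can be performed in any order. From above, each terminal $H$ is reached by at most $9^{d}$ pairs: reconstructing $\sigma$ backwards amounts, at each step, to picking the last-introduced degree-$3$ vertex $v'$ (three local positions in the rotation around $v'$ for the edge back to $v$), combined with at most three choices for how the two transferred half-edges were arranged. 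Comparing with the weight $w(G)=2^{-f_{1}(G)}\prod_{i\ge 2}(i!)^{-f_{i}(G)}$ gives the claimed inequality.

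For the lower bound $\q(n+d;0)/(d!\,6^{2d})\le \q(n;d)$, I would go in the reverse direction. Starting from $H\in\Q(n+d;0)$, contract an ordered $d$-tuple of edges; the result is a planar multigraph on $n$ vertices with $(3n+d)/2$ edges and minimum degree at least $3$, hence lies in $\Q(n;d)$. For each $G\in\Q(n;d)$, the number of pairs $(H,\vec e)$ contracting to $G$ is at most $d!\,6^{2d}$: the $d!$ absorbs the ordering of the $d$ contractions, while $6^{2d}$ is a uniform overcount for the local expansion data at each of the $2d$ half-edges involved in the contractions (six distinct local placements per half-edge, taking loop and multi-edge status into account). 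Weight tracking then gives the claimed inequality.

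The main obstacle will be the careful bookkeeping of $w(G)$ under these local moves. A split can convert a loop at $v$ into a simple edge $vv'$, losing a factor of $\tfrac12$; a contraction can convert two parallel edges into a double loop or merge a double edge with a single edge, mixing several $f_{i}$ simultaneously. A brief case analysis over the configurations of loops, double, and triple edges incident to the split vertex is needed to show that the per-step weight distortion is bounded uniformly by $9$ on one side and by $6^{2}=36$ on the other, which is exactly what calibrates the constants $9^{d}$ and $6^{2d}$ in the two bounds.
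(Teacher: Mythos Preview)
Your split/contract strategy is the paper's, but your bookkeeping of where the constants $9^{d}$ and $6^{2d}$ come from is wrong, and two of the claims you rely on do not hold as stated.

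For the upper bound: reconstructing $\sigma$ backwards from a cubic $H$ yields at most $3^{d}$ pairs $(G,\sigma)$, not $9^{d}$. At step~$j$ you pick which of the three edges at $n+j$ to contract, and the pre-split graph is then completely determined; there is no further ``arrangement of the two transferred half-edges'' to choose. Separately, the assertion that each $G$ admits at least $d!$ split sequences ``because the $d$ insertions can be performed in any order'' fails whenever several splits are forced to occur at the same high-degree vertex of $G$ (consecutive splits at one vertex do not commute, and permuting the labels $n+1,\dots,n+d$ on the target $H$ need not give $d!$ distinct graphs if $H$ has symmetries). The paper avoids both problems by running the map in the \emph{opposite} direction: from each cubic $G\in\Q(n+d;0)$, choose one of the three edges at each of $n+1,\dots,n+d$ (at most $3^{d}$ choices) and contract. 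One then checks that (a) the weight satisfies $w(H)\le(3/2)^{d}w(G)$, the extremal case being a triple edge of weight $1/6$ contracting to two loops of weight $1/4$, and (b) each $H\in\Q(n;d)$ is produced at least $d!\,2^{-d}$ times, via a relabelling argument on $\{n+1,\dots,n+d\}$. Thus $9^{d}=3^{d}\cdot(3/2)^{d}\cdot 2^{d}$: one factor for edge choices, one for weight distortion, one for the relabelling slack. It is not $3\times 3$ local choices per step.

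For the lower bound: you never say \emph{which} $d$-tuple of edges of $H\in\Q(n+d;0)$ to contract, so the double-count is not even set up (an arbitrary $d$-tuple need not land on vertex set $[n]$, nor give min-degree three). The paper's device is to restrict to those $H\in\Q(n;d)$ with \emph{maximum degree four}. Such an $H$ has exactly $d$ vertices of degree four; splitting each one admits at most six local choices, the $d$ new vertices can be labelled in $d!$ ways, and each split increases the weight by at most a factor of~$6$ (a quadruple edge of weight $1/24$ becoming two double edges of total weight $1/4$). Hence $6^{2d}=6^{d}\cdot 6^{d}$, one factor combinatorial and one for weight --- not ``six placements per half-edge at $2d$ half-edges''.
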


\begin{proof}
The idea of the proof is to generate multigraphs in $\Q(n;d)$
from the graphs from $\Q(n+d;0)$ by contracting edges
incident to vertices $\{n+1,n+2,\dots, n+d\}$.  More precisely,
for each of the vertices
$\{n+1,n+2,\dots, n+d\}$ we choose one of
the incident edges $e_i=\{i,w_i\}$,
$i=n+1,\dots, n+d$, and contract it, i.e.\ we replace
the vertices $i,w_i$ by one vertex $x$
which is adjacent to all neighbors of $i$ and $w_i$.
Finally, we  relabeled $x$ by  label $\min\{i,w_i\}$.
Sometimes this procedure fails to give
a multigraph in $\Q(n+d;0)$ (e.g.\ when an edge is nominated
by both its ends, or some of the edges $e_i$
form a cycle), nonetheless each multigraph from $\Q(n;d)$ can clearly
be obtained from some graph from $\Q(n;d)$ in the above process.

Now let us show the upper bound for $\q(n;d)$.
Choose  $G\in \Q(n+d;0)$ and  select edges
$e_i$, $i=n+1,\dots, n+d$,   in one of at most $3^d$ ways. Suppose that
by contracting all edges $e_i$, $i=n+1,\dots, n+d$,
we get a multigraph $H\in \Q(n;d)$. Note that the
weight of the multigraph $H$ could only increase by
at most $(3/2)^d$ in the case when
all vertices  $i=n+1,\dots, n+d$ belong  to different
components of size two  (then we replace a triple edge
which contributes to the weight $1/6$ by  two loops
of total weight $(1/2)^2$). Finally, we claim that
there are at least $d! 2^{-d}$ graphs $G'\in \Q(n+d;0)$ which differ
from $G$ only by labelings of vertices
$n+1,\dots, n+d$, i.e.\ each $H\in \Q(n;d)$ is counted in this procedure
at least $d! 2^{-d}$ times. Indeed, let us remove all
labels $n+1,\dots, n+d$ from vertices of $G$ getting
a graph $\bar G$ in which $d$ `dummy' vertices are not
labeled. We try to relabel those vertices with $n+1,\dots, n+d$.
Take  any vertex $w$ of $H$ of degree larger than three.
Then, $w$ is adjacent in $\bar G$ to $i$ dummy vertices,
where $1\le i\le 3$.
Thus, we can label neighbors of $w$  with $i$ labels
from $n+1,\dots, n+d$ in at least
$$\binom {d}{i}\ge \frac{d\dots (d-i+1)}{2^i}$$
ways. Now take another vertex which has been already labeled
and choose labels for its dummy neighbors, and so on,
until all dummy vertices gets their labels.
Clearly, the number of way of doing that is bounded
from below by $d!2^{-d}$.
Hence, using  (\ref{eq:generalcubic}), we get
\begin{equation*}
\q(n;d)
\le \frac{\q(n+d;0)3^d(3/2)^d}{d! 2^{-d}}
= \frac{\q(n+d;0)9^{d}}{d!}.
\end{equation*}

In order to get a lower bound for $\q(n;d)$ we count only
multigraphs $H\in \Q(n;d)$ with maximum degree four.
Note that the number of vertices of degree four in $H$ is $d$.
Each vertex  $v$ of degree four in $H$ we split into two vertices:
one of them we label with $v$ the other we leave as a `dummy'
vertex which has not get  labeled so far.
We add an edge between the two vertices, which we mark as used.
We can make such a split into at most six possible ways.
Now, we can choose
labels for dummy vertices into one of possible $d!$ ways
(note that each dummy vertex is uniquely identified
by the other end of the used edge). Consequently,
from each $H$ we get at most $d! 6^d$ different
graphs from $\Q(n;d)$ with $d$ disjoint edges marked.
Note however that splitting a vertex we may increase
the weight of the graph by at most six (if we
split the quadruple edge into two double edges),
thus the total weight of different multigraphs
obtained from $H$ is bounded from above by $d! 6^{2d}$.
Consequently,
\begin{equation*}
\q(n;d)\ge \frac{\q(n+d;0)}{d! 6^{2d}}\,.
\end{equation*}
\end{proof}

\section{Planar graphs with positive excess}\label{sec:complexplanar}
Recall that a graph is called complex if all its components have
positive excess.
In this section, we  derive the asymptotic number of all labeled
\emph{complex} planar graphs  with given size and excess
(Theorem~\ref{thm:complex}). In order to do that we first estimate
the number of such graphs with given deficiency
(Lemma~\ref{lem:complex-supercubickernel}).

\subsection{Fixed deficiency}
In this section we  estimate the number $C_{d}(k,k+\ell)$ of all labeled
complex planar graphs $G$ on $k$ vertices with $\ex(G)=\ell>0$
and $\dd(G)=d\ge 0$.

To this end, observe first that the core $\core(G)$
of a complex graph~$G$ can be  obtained
from~$G$ by pruning each vertex of degree one in $G$ recursively,
i.e. to get $\core(G)$ we have to delete the tree-like part of~$G$
rooted at vertices of $\core(G)$. Then, in order  to find the kernel
$\ker(G)$ of $G$, one needs to replace each path in  $\core(G)$
whose internal vertices are all of degree two by a single edge.
Note that both $\core(G)$ and $\ker(G)$ have the same excess as $G$,
and  $\ker(G)$  is a planar multigraph of minimum degree at least three.
Therefore, if $\ex(G)=\ell$ and $\dd(G)=d$,  the number of vertices
in $\ker(G)$, denoted by $v(\ker)$,  equals $2\ell-d$ and the number
of edges, denoted by $e(\ker)$,  equals $3\ell-d$.

In order to find $\pl(n,M)$ we  reverse the above procedure
and first count all possible kernels of graphs, then
study the number of cores which lead to these kernels, and finally
add to them the rooted forest to obtain all possible graphs $G$.
More precisely, one can construct all graphs $G$
on $k$ vertices with $k+\ell$ edges, whose kernel is a planar multigraph
graph with minimum degree at least three and deficiency $d$,
in the following way.
\begin{itemize}
\item [(i)] Choose the vertex set $\{c_1,c_2,\cdots,c_i\}$ of
the core  (for some $i\le k$), and then select the vertex set
$\{k_1,k_2,\cdots,k_{v(\ker)}\}$ of the kernel
from the vertex set of the core.
It can be done in ${k \choose i}{i\choose v(\ker) }$ ways.
\item [(ii)] Select a kernel  of order $v(\ker)$  among all
the possible candidates for kernels (i.e.\ cubic planar weighted
multigraphs on vertex set $[v(\ker)]$ with $e(\ker)$ edges),
and then map  $[v(\ker)]$ to $\{k_1,k_2,\cdots,k_{v(\ker)}\}$
in their relative order, i.e.\ we map $j$ to $k_j$.
There are $\q(v(\ker);d)$ ways of doing that.

\item [(iii)]  Order the edges of the kernel lexicographically,
each edge with a direction from the one end point with the smaller label
to the other end point with the larger label. For multiple edges with
the same ends take any order, and choose one of two possible
directions  for each loop (the weights we assign when we
counted candidates for the kernel was chosen precisely
to assure that in this way we avoid double counting).
Now make a directed path of length $e(\ker)$ consisting of
the kernel edges, according to this order, and insert the core vertices
that are not in the kernel on the edges of the kernel in such a way that
each loop gets at least two core vertices, and at least $j-1$ edges
from $j$ edges incident to the common end points
get at least one core vertex.
Let $m=m(\ell)$ be such that $m\, \ell=2f_1+\sum_{j\ge 2}(j-1)f_j$,
where $f_1$ denotes the number of loops and $f_j$ multiple edges
with parity $j$ for $j\ge 2$ in $\ker(G)$ (hence $0\le m\le 6$).
This can be done in
$(i-v(\ker))!\binom{i-v(\ker)-m\ \ell+e(\ker)-1}{e(\ker)-1}$
ways.
\item [(iv)] Plant a rooted forest on the core vertices.
According  to Cayley's formula, one can do it in $ik^{k-i-1}$ ways.
\end{itemize}
As a consequence, we have
\begin{align}
&C_{d}(k,k+\ell)\nonumber\\
&=\sum_{i}{k \choose i}{i\choose v(\ker)}\ \q(v(\ker);d)\
 (i-v(\ker))!\binom{i-v(\ker)-m\ \ell+e(\ker)-1}{e(\ker)-1}\
 ik^{k-i-1}\nonumber\\
&=\sum_{i}\frac{(k)_i}{(2\ell-d)!}\ \q(2\ell-d;d)\ \binom{i+\ell-m\
\ell-1}{3\ell-d-1}\ ik^{k-i-1}.\label{eq:Cd}
\end{align}

Applying Lemma~\ref{lem:cubic-deficiency} we obtain the following estimate.

\begin{lemma}\label{lem:complex-supercubickernel}
Let $d\ge 0$ and $k, \ell>0$ be integers.
Let $\gamma$, $g$ be the constants so that the assertion of
Theorem~\ref{thm:cubic-asymptotics} holds. Then
\begin{equation}\label{eq:Cd-final}
\begin{aligned}
C_{d}(k,k+\ell)
&=2^{-4}g\ k^{k+\frac{3\ell-d-1}{2}}\ {\gamma}^{2\ell}\ \ell^{-7/2}\
e^{\frac{3\ell-d}{2}} (3\ell-d)^{-\frac{3\ell-d-1}{2}}
\alpha^{d}\binom{2\ell}{d}\nonumber\\
&\times
\exp\left(\left(\frac{2}{3}d-(m+1)\ell\right)\sqrt{\frac{3\ell-d}{k}}
+O\left(\frac{\ell^2}{k}+\frac{1}{\ell}\right)\right),
\end{aligned}
\end{equation}
for some $\alpha=\alpha(k,\ell)$, $6^{-2}\le \alpha\le 9$,
and $m=m(k,\ell)$, $0\le m\le 6$.

Moreover, the typical size of the core in a randomly chosen complex
planar graph $G$ on $k$ vertices with $\ex(G)=\ell$ and $\dd(G)=d$ is \aas
$$\big(1+O(\sqrt{\ell/k})+O(1/\sqrt{\ell})\big)\sqrt{3k\ell}\,.$$
\end{lemma}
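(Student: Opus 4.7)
The starting point is identity~(\ref{eq:Cd}) established in the excerpt, which expresses $C_d(k,k+\ell)$ as a sum over candidate core sizes $i$ with the kernel-counting factor $\q(2\ell-d;d)$ as a common prefactor. My plan is to extract that prefactor using the already-proven results and then evaluate the $i$-sum by Laplace's method. Lemma~\ref{lem:cubic-deficiency} gives $\q(2\ell-d;d)=(\alpha^d/d!)\,\q(2\ell;0)$ for some $\alpha\in[6^{-2},9]$, while Theorem~\ref{thm:cubic-asymptotics} gives $\q(2\ell;0)=(1+O(1/\ell))\,g\,(2\ell)^{-7/2}\gamma^{2\ell}(2\ell)!$. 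Combining these,
$$\frac{\q(2\ell-d;d)}{(2\ell-d)!}=(1+O(1/\ell))\,g\,(2\ell)^{-7/2}\gamma^{2\ell}\binom{2\ell}{d}\alpha^d$$
pulls out of the sum and accounts for the $g\,\gamma^{2\ell}\binom{2\ell}{d}\alpha^d\,\ell^{-7/2}$ part of the claim together with the $O(1/\ell)$ piece of the error. The residual $2^{-7/2}$ from $(2\ell)^{-7/2}$ will combine with a $2^{-1/2}$ coming from the Gaussian integration below to produce the prefactor $2^{-4}$.

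Writing $a:=3\ell-d$, it remains to evaluate $S:=\sum_i(k)_i\binom{i+(1-m)\ell-1}{a-1}\,i\,k^{k-i-1}$ by Laplace's method. Stirling-type expansions give
\begin{align*}
\log\bigl[(k)_i\,k^{k-i}\bigr] &= k\log k - \tfrac{i^2}{2k} + \tfrac{i}{2k} - \tfrac{i^3}{6k^2} + O(\ell^4/k^3),\\
\log\binom{i+(1-m)\ell-1}{a-1} &= (a-1)\log i - \tfrac{(a-1)((1+2m)\ell-d)}{2i} - \log(a-1)! + O(\ell^3/i^2),
\end{align*}
so the summand is maximised at $i_{*}=\sqrt{ak}$ with second log-derivative $-2/k$. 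A Gaussian integration of width $\sqrt{k/2}$ around $i_{*}$, followed by Stirling applied to $(a-1)!$, produces the skeleton $(\mathrm{const})\cdot k^{k+(a-1)/2}\,e^{a/2}\,a^{-(a-1)/2}\cdot e^{\Lambda}$, with $\Lambda$ gathering the surviving $\sqrt{a/k}$-order corrections. Substituting $i_{*}=\sqrt{ak}$ gives $\sqrt{a/k}(1/2-a/6)$ from $(k)_i$ and $-\sqrt{a/k}\cdot((1+2m)\ell-d)/2$ from the binomial, whose sum, after expanding $a=3\ell-d$, equals $\bigl(\tfrac{2d}{3}-(m+1)\ell\bigr)\sqrt{(3\ell-d)/k}+\tfrac12\sqrt{a/k}$. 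By AM--GM, $\sqrt{\ell/k}\le\tfrac12(\ell^2/k+1/\ell)$, so the residual $\tfrac12\sqrt{a/k}$ is absorbed into the claimed $O(\ell^2/k+1/\ell)$; the remaining higher-order contributions ($i^4/k^3$ in $\log(k)_i$ at $i_{*}$ and the $1/i^2$ tail of the binomial) are likewise $O(\ell^2/k)$. Concentration of the core size follows from the same estimate: $i-i_{*}$ is approximately Gaussian with standard deviation $\sqrt{k/2}$, whence $i=i_{*}(1+O(1/\sqrt\ell))$; combining with $i_{*}=\sqrt{(3\ell-d)k}$ and absorbing the $d$-dependence into the $O(\sqrt{\ell/k})$ term yields the \aas bound $\sqrt{3k\ell}\bigl(1+O(\sqrt{\ell/k})+O(1/\sqrt\ell)\bigr)$.

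The main obstacle is the delicate cancellation producing the coefficient $\tfrac{2d}{3}-(m+1)\ell$ of $\sqrt{(3\ell-d)/k}$: the cubic Stirling correction $-i^3/(6k^2)$ in $\log(k)_i$ and the first-order shift in the binomial both individually contribute at the scale $\sqrt{a^3/k}$, so neither can be dropped, and the exact form emerges only after a short algebraic simplification using $a=3\ell-d$. Checking simultaneously that every remaining term fits uniformly into $O(\ell^2/k+1/\ell)$—keeping track of the $d$-dependence of both the saddle location $i_{*}$ and the shift $(1-m)\ell$ in the binomial—also requires care; the rest of the Laplace/Stirling machinery is routine.
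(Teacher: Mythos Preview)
Your proposal is correct and follows essentially the same route as the paper: pull out the kernel factor via Lemma~\ref{lem:cubic-deficiency} and Theorem~\ref{thm:cubic-asymptotics}, then treat the $i$-sum by a Stirling/Laplace analysis around the saddle $i_*\approx\sqrt{(3\ell-d)k}$, with the Gaussian width $\sqrt{k/2}$ yielding the missing $2^{-1/2}$ and the concentration giving the core-size statement. The only cosmetic difference is that the paper locates the saddle one order more precisely, at $i_0=\bigl(1+\tfrac{m-1}{6}\sqrt{(3\ell-d)/k}\bigr)\sqrt{k(3\ell-d)}$, and reads the $\sqrt{(3\ell-d)/k}$ correction off $a(i_0)$ directly, whereas you evaluate the same subleading terms at the cruder $i_*$; both produce the coefficient $\tfrac{2d}{3}-(m+1)\ell$ after the cancellation you describe. (Your $O(\ell^4/k^3)$ in the expansion of $\log[(k)_i k^{k-i}]$ is presumably a slip for $O(i^4/k^3)$, which you later correctly bound by $O(\ell^2/k)$ at $i_*$.)
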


\begin{proof}
Note that from Lemma \ref{lem:cubic-deficiency} we have
\begin{equation*}
\frac{\q(2\ell;0)}{d! 6^{2d}}
\le \q(2\ell-d;d)
\le
\frac{\q(2\ell;0)9^d}{d!},
\end{equation*}
where
\begin{equation*}
\q(2\ell;0) \, \stacksign{(\ref{eq:generalcubic})}{=}\,
(1+o(\ell^{-1}))g \ {(2\ell)}^{-7/2}\ {\gamma}^{2\ell}\  (2\ell)!.
\end{equation*}
Therefore from  (\ref{eq:Cd}) we get
\begin{align}
&\hspace{-5ex}\sum_{2\ell-d\le i \le k }g\ k^{k-1}\
{\gamma}^{2\ell}\ (2\ell)^{-7/2}\ {6^{-2d}}\ \binom{2\ell}{d}\
\sum_{i}\ (k)_i\ i\ k^{-i}\ \binom{i+(1-m)\ell-1}{3\ell-d-1}
\le C_{d}(k,k+\ell)\nonumber\\
&\le \sum_{2\ell-d\le i \le k }g\ k^{k-1}\ {\gamma}^{2\ell}\
(2\ell)^{-7/2}\ {9^{d}}\ \binom{2\ell}{d}\
\sum_{i}\ (k)_i\ i\ k^{-i}\ \binom{i+(1-m)\ell-1}{3\ell-d-1}.\label{eq:Cd-2}
\end{align}
Thus, it is enough to estimate the quantity
\begin{equation}\label{eq:cdeficiency-upper}
\tilde C_{d}(k,k+\ell)
=g\ k^{k-1}\ {\gamma}^{2\ell}\ (2\ell)^{-7/2}\
{\alpha^{d}}\ \binom{2\ell}{d}\
\sum_{i}\ (k)_i\ i\ k^{-i}\ \binom{i+(1-m)\ell-1}{3\ell-d-1},
\end{equation}
where  $\alpha$ and $m$ satisfy $6^{-2}\le \alpha\le 9$ and $0\le m\le 6$.

Below  we use several times Stirling's formula
\begin{equation}\label{eq:stirling}
n! =\; (1+O(1/n))\sqrt{2\pi}\ n^{n+1/2}e^{-n}\quad
\textrm{for}\quad n\in \mathbb N
\end{equation}
and the following consequence of Maclaurin expansion of $e^x$
\begin{equation}\label{eq:1+x}
1+x = \exp(x-x^2/2+x^3/3+O(x^4))\,.
\end{equation}
To derive an asymptotic formula for (\ref{eq:cdeficiency-upper}),
note that
\begin{align*}
(k)_i
= k^i\prod_{j=0}^{i-1}\left(1-\frac{j}{k}\right)
&\stacksign{(\ref{eq:1+x})}{=}\,
k^i\exp\left(\sum_{j=0}^{i-1}\left({-\frac{j}{k}- \frac12
\left(\frac{j}{k}\right)^2- \frac13
\left(\frac{j}{k}\right)^3+O\left(\frac{j}{k}\right)^4}\right)\right)\nonumber\\
&=k^i\exp\left({-\frac{i^2}{2k}-\frac{i^3}{6k^2}+
O\left(\frac{i}{k}+\frac{i^4}{k^3}\right)}\right)\,
\end{align*}
and
\begin{align*}
&\hspace{-5ex}\binom{i+(1-m)\ell-1}{3\ell-d-1}
= \frac{(i+(1-m)\ell-1)_{3\ell-d-1}}{(3\ell-d-1)!}\\
&\; \stacksign{(\ref{eq:stirling})}{=}
\frac{(3\ell-d)\ i^{3\ell-d-1}\prod_{j=1}^{3\ell-d-1}
\left(1+\frac{(1-m)\ell -j}{i}\right)}{(1+O\left({1}/{\ell}\right))
\sqrt{2\pi (3\ell-d)}\ ((3\ell-d)/e)^{3\ell-d}}\nonumber\\
&\stacksign{(\ref{eq:1+x})}{=}
\frac{e^{3\ell-d}i^{3\ell-d-1}}{\sqrt{2\pi}(3\ell-d)^{3\ell-d-1/2}}
\exp\left(-\frac{(3\ell-d)^2}{2i}+\frac{(1-m)\ell(3\ell-d)}{i}
+\frac{1}{\ell}+\frac{\ell}{i}\right).
\end{align*}
Next, we rewrite the sum over $i$ in (\ref{eq:cdeficiency-upper}) as
\begin{equation}\label{eq:iterms}
\sum_{i}\ (k)_i\ i\ k^{-i}\ \binom{i+(1-m)\ell-1}{3\ell-d-1}
=\frac{e^{3\ell-d}}{\sqrt{2\pi}(3\ell-d)^{3\ell-d-1/2}}
\ \sum_{i}\exp( a(i))\,,
\end{equation}
where the function $a(i)=a_{k,\ell,d}(i)$ is defined as
\begin{align*}
a(i)= (3\ell-d)\log i
&{-\frac{i^2}{2k}-\frac{i^3}{6k^2}}
-\frac{(3\ell-d)^2}{2i}+\frac{(1-m)\ell(3\ell-d)}{i}+\frac{1}{\ell}\\
&+\frac{\ell}{i}+O\left(\frac{i}{k}\right)+O\left(\frac{i^4}{k^3}\right).
\end{align*}
We observe that the main contribution to  (\ref{eq:iterms})
comes from the terms $i=i_0+O(\sqrt{k})$,  where
\begin{equation}\label{eq:add1}
i_0=\left(1+\frac{m-1}{6}\sqrt{\frac{3\ell-d}{k}}\right)\sqrt{k(3\ell-d)}.
\end{equation}
For such $i$'s, we have
\begin{align*}
\exp(a(i_0))
=(k(3\ell-d))^{\frac{3\ell-d}{2}}&\ \exp\left( -\frac{3\ell-d}{2}
+\left(\frac{2}{3}d-(m+1)\ell\right)\sqrt{\frac{3\ell-d}{k}}\right)\\
&\quad\quad\times \exp\left(O\left(\frac{\ell^2}{k}\right)
+O\left(\frac{1}{\ell}\right)\right)
\end{align*}
and
\begin{equation*}
\sum_{i=i_0+O(\sqrt{k})}\exp(a(i)-a(i_0))
=\sum_{\Delta i=O(\sqrt{k})}\exp\left(- \frac{1}{k}(\Delta i)^2
+ O\left(\frac{\ell}{k}\right)\right)
={\sqrt{\pi k}}\ \exp\left(O\left(\frac{\ell}{k}\right)\right).
\end{equation*}
Thus, we get
\begin{equation}\label{eq:iterms-final}
\begin{aligned}
\hspace{-5ex}\sum_{i}\ (k)_i\ i\ k^{-i}\binom{i+(1-m)\ell-1}{3\ell-d-1}
&= 2^{-1/2}  k^{\frac{3\ell-d+1}{2}}
(3\ell-d)^{-\frac{3\ell-d+1}{2}} e^{\frac{3\ell-d}{2}}\\
&\hspace{-15ex}\times \exp\left(\left(\frac{2}{3}d-(m+1)\ell\right)
\sqrt{\frac{3\ell-d}{k}} +O\left(\frac{\ell^2}{k}
+\frac{1}{\ell}\right)\right).
\end{aligned}
\end{equation}
Finally, (\ref{eq:cdeficiency-upper}) and (\ref{eq:iterms-final}) yield
\begin{align*}
\tilde C_{d}(k,k+\ell)
=\,  2^{-4}g\ {\gamma}^{2\ell}
&\ \ell^{-7/2}\ k^{k+\frac{3\ell-d-1}{2}}\ e^{\frac{3\ell-d}{2}}
(3\ell-d)^{-\frac{3\ell-d-1}{2}} \alpha^{d}\binom{2\ell}{d}\\
&\times \exp\left(\left(\frac{2}{3}d-(m+1)\ell\right)
\sqrt{\frac{3\ell-d}{k}}+O\left(\frac{\ell^2}{k}+\frac{1}{\ell}\right)\right).
\end{align*}
The last part of the assertion follows from the fact that
the main contribution to the sum (\ref{eq:cdeficiency-upper})
comes from the
terms $i=(1+o(1))i_0$, where $i_0$ is given by (\ref{eq:add1}).
\end{proof}

\subsection{Asymptotic numbers and typical deficiency}
In this section we  estimate the number $C(k,k+\ell)$
of labeled complex planar graphs with $k$ vertices
and $k+\ell$ edges.

\begin{theorem}\label{thm:complex}
Let $\gamma$, $g$, $g_c$ be the constants for which the assertion
of Theorem~\ref{thm:cubic-asymptotics} holds and
let $k$, $\ell>0$ be integers.
\begin{itemize}
\item[(i)]
There exists
a function $\beta=\beta(k,\ell)$ with $-14\le \beta\le 2^7$, for which
\begin{align*}
C(k,k+\ell)
=2^{-4} 3^{1/2}\ g &\  k^{k+3\ell/2-1/2}
\left(\frac{\gamma^{2}e^{3/2}}{3^{3/2}}\right)^{\ell}
\ \ell^{-3\ell/2-3}\\
& \times
\exp\left(\beta\sqrt{\frac{\ell ^3}{k}}
+O\left(\frac{\ell^2}{k}\right)+O\left(\frac{1}{\ell}\right)\right).
\end{align*}
\item[(ii)] The number $C^{conn}(k,k+\ell)$
of labeled \emph{connected} complex planar graphs
with $k$ vertices and $k+\ell$ edges is given by  a similar
formula, with $g$ replaced by $g_c$.
\item[(iii)] A graph chosen uniformly at random among all complex planar
 graphs with $k$ vertices and $k+\ell$ edges has \aas deficiency
 $\Theta(\sqrt{\ell^3/k})$ and the core of size
$(1+O(\sqrt{\ell/k})+O(1/\sqrt{\ell}))\sqrt{3k\ell}$.
In particular, if $\ell=o(k^{1/3})$,
then \aas such a random graph is clean.

\item[(iv)] If $\ell=O(k^{1/3})$, then  a graph chosen uniformly at
 random among all complex planar graphs with $k$ vertices and $k+\ell$
 edges has \aas $O(1)$ components, among which there is a
 giant component of size  $k-O(k/\ell)$. Furthermore, \aas
each  small component has $\Theta(k/\ell)$ vertices and
the probability that such a graph contains exactly $h$ such components
is bounded away from both 0 and 1 for every $h=0,1,\dots$.
\end{itemize}
\end{theorem}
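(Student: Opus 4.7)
The plan is to derive (i) by summing the estimate for $C_{d}(k,k+\ell)$ from Lemma~\ref{lem:complex-supercubickernel} over all admissible deficiencies $0\le d\le 2\ell$, and then to extract (ii)--(iv) from the saddle-point structure of that sum in combination with the kernel-component results of Section~\ref{sec:cubicplanar}.

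For (i), I would write $C(k,k+\ell)=\sum_{d=0}^{2\ell}C_{d}(k,k+\ell)$, substitute the formula from Lemma~\ref{lem:complex-supercubickernel}, and regard the summand as $\exp(\Phi(d))$. Computing the discrete derivative $\Phi(d+1)-\Phi(d)$ using $\binom{2\ell}{d+1}/\binom{2\ell}{d}=(2\ell-d)/(d+1)$ together with the ratio $(3\ell-d-1)^{-(3\ell-d-2)/2}/(3\ell-d)^{-(3\ell-d-1)/2}=\exp(\tfrac12\log(3\ell-d)+\tfrac12+O(1/\ell))$ shows that, modulo the error exponent, $\Phi$ is concave with a unique interior maximiser $d^{\ast}=\beta_{0}\sqrt{\ell^{3}/k}(1+o(1))$ for an explicit $\beta_{0}>0$. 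A Laplace / Gaussian-sum argument about $d^{\ast}$ gives $\sum_{d}\exp(\Phi(d))=\Theta(\sqrt{d^{\ast}+1})\exp(\Phi(d^{\ast}))$. A convenient sanity check: the $d=0$ summand, after rewriting $(3\ell)^{-(3\ell-1)/2}=3^{-(3\ell-1)/2}\ell^{-(3\ell-1)/2}$, already yields exactly $2^{-4}\cdot 3^{1/2}\,g\,k^{k+3\ell/2-1/2}(\gamma^{2}e^{3/2}/3^{3/2})^{\ell}\,\ell^{-3\ell/2-3}$; the saddle-point shift and the polynomial prefactor are absorbed into $\exp(\beta\sqrt{\ell^{3}/k}+O(\ell^{2}/k)+O(1/\ell))$, with $\beta$ controlled in the stated range by the bounds $6^{-2}\le\alpha\le 9$ and $0\le m\le 6$.

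Part (ii) simply repeats the proof of (i) with $\q(v(\ker);d)$ replaced by its connected analogue counting connected supercubic kernels, since a connected complex planar graph has a connected kernel and conversely. Theorem~\ref{thm:cubic-asymptotics} together with a verbatim repetition of the edge-contraction argument of Lemma~\ref{lem:cubic-deficiency} (which preserves connectedness) produces the same estimate with $g$ replaced by $g_{c}$. Part (iii) is then immediate: under the uniform distribution on complex planar graphs with $k$ vertices and $k+\ell$ edges, the deficiency has law proportional to $\exp(\Phi(d))$, so the Laplace analysis gives concentration at $d^{\ast}=\Theta(\sqrt{\ell^{3}/k})$ with fluctuations of order $\sqrt{d^{\ast}}$; in particular $\ell=o(k^{1/3})$ forces $d=0$ \aas. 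The core-size asymptotic transfers directly from the uniform-in-$d$ statement at the end of Lemma~\ref{lem:complex-supercubickernel}.

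For (iv), the assumption $\ell=O(k^{1/3})$ yields $d=O(1)$ \aas by (iii), so the kernel is cubic apart from $O(1)$ defect-edges and Lemma~\ref{lem:cubic-giant} applies modulo those defects. This gives \aas a giant kernel component on $2\ell-O(1)$ vertices together with a finite Poisson-distributed number of small kernel components of bounded size; each such kernel component produces exactly one component of the complex planar graph via the construction preceding (\ref{eq:Cd}). Tracking how the $\sqrt{3k\ell}-O(\ell)$ extra core vertices and the $k-\sqrt{3k\ell}$ forest vertices distribute among kernel components -- essentially uniformly across the $\Theta(\ell)$ kernel edges, giving $\Theta(\sqrt{k/\ell})$ core vertices per small component, and then $\Theta(\sqrt{k/\ell})$ forest vertices per core vertex via the $ik^{k-i-1}$ Cayley-type factor -- yields the claimed $\Theta(k/\ell)$ vertices per small component. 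The main technical obstacle throughout is uniform control of the error $\exp(O(\ell^{2}/k)+O(1/\ell))$ of Lemma~\ref{lem:complex-supercubickernel} across the entire summation range: it must not swamp the quadratic saddle fluctuations, and the boundary regimes (very small and very large $d$) need separate tail bounds using the crude inequalities $6^{-2d}\le\alpha^{d}\le 9^{d}$.
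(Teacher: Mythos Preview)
Your proposal is correct and follows essentially the same route as the paper: sum $C_d(k,k+\ell)$ over $d$, locate the dominant range $d=\Theta(\sqrt{\ell^{3}/k})$, and read off (ii)--(iv) from the structure of that sum together with Lemma~\ref{lem:cubic-giant}. The one simplification worth noting is that the paper avoids a Laplace/Gaussian-sum analysis of $\sum_d\exp(\Phi(d))$ altogether: after pulling out the $d$-independent factors it recognises the remaining sum as essentially the binomial sum
\[
\sum_{d}\binom{2\ell}{d}\bigl(\alpha\sqrt{3\ell/k}\bigr)^{d}
=(1+\alpha\sqrt{3\ell/k})^{2\ell}
=\exp\!\Bigl(2\alpha\sqrt{3\ell^{3}/k}+O(\ell^{2}/k)\Bigr),
\]
which, combined with the factor $\exp\bigl(-(m+1)\sqrt{3\ell^{3}/k}\bigr)$, gives $\beta=(2\alpha-(m+1))\sqrt{3}$ directly. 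Applying this with the extremal values of $\alpha$ and $m$ yields matching upper and lower bounds in one line, so the uniform-error and tail-bound worries you flag at the end simply do not arise.
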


\begin{proof}
Using the asymptotic estimate of $C_{d}(k,k+\ell)$ from
Lemma~\ref{lem:complex-supercubickernel}, we get the following.
\begin{align}
C(k,k+\ell)
&=\sum_{d}C_{d}(k,k+\ell)\nonumber\\
&\stacksign{(\ref{eq:Cd-final})}{=}\,
2^{-4}g\ {\gamma}^{2\ell}\ k^{k+\frac{3\ell-1}{2}}\
e^{\frac{3\ell}{2}} (3\ell)^{-\frac{3\ell-1}{2}}\ \ell^{-7/2}\
\exp\left(O\left(\frac{\ell^2}{k}
+\frac{1}{\ell}\right)\right)\label{eq:maxdeficiency}
\\
&\times
\sum_{d}k^{-\frac{d}{2}}\ (3\ell)^{\frac{d}{2}} \alpha^{d}\binom{2\ell}{d}
\exp\left(\left(\frac{2}{3}d
-(m+1)\ell\right)\sqrt{\frac{3\ell-d}{k}}\right),\nonumber
\end{align}
where $\alpha=\alpha(k,\ell)$ and $m=m(k,\ell)$ satisfy
$6^{-2}\le \alpha\le 9$ and $0\le m\le 6$.

Define a function $\eta(d)=\eta_{k,\ell}(d) $ as
\begin{equation*}
\eta(d) = \sum_{d}k^{-\frac{d}{2}}\ (3\ell)^{\frac{d}{2}}
 \alpha^{d}\binom{2\ell}{d}
\exp\left(\left(\frac{2}{3}d-(m+1)\ell\right)\sqrt{\frac{3\ell-d}{k}}\right).
\end{equation*}
We observe that the main contribution to $\eta(d) $ comes from
$d=\Theta(\sqrt{\ell^3/k})$ and therefore
\begin{align*}
\eta(d)
&= \sum_{d} \binom{2\ell}{d} (\alpha \sqrt{3\ell/k})^{d}
\exp\left(-(m+1)\ell\sqrt{\frac{3\ell}{k}}+O\left(\frac{\ell^2}{k}
\right)\right)\\
&=  (1+\alpha \sqrt{3\ell/k})^{2\ell}
\exp\left(-(m+1)\sqrt{\frac{3\ell^3}{k}}+O\left(\frac{\ell^2}{k}\right)\right)\\
&=  \exp\left((2\alpha-(m+1))
\sqrt{\frac{3\ell^3}{k}}+O\left(\frac{\ell^2}{k}\right)\right).\nonumber
\end{align*}
Finally, taking $\beta=\beta(k,\ell)=(2\alpha-(m+1))\sqrt{3}$
(and thus $-14\le \beta\le 2^7$) completes the proof of (i).

In order to show (ii) one should repeat  computations from the proof of
Lemma~\ref{lem:complex-supercubickernel} and the one given above,
for graphs with \emph{connected} kernels.
Therefore $\q(2\ell;0)$ should be replaced by the number of \emph{connected}
cubic planar weighted multigraphs vertices, which,  by (\ref{eq:concubic}),
is equal to
$(1+O(\ell^{-1}))g_c \ {(2\ell)}^{-7/2}\ {\gamma}^{2\ell}\  (2\ell)!$.

To see (iii) observe the main contribution to (\ref{eq:maxdeficiency})
comes from $d=\Theta(\sqrt{\ell^3/k})$.

Finally,  Lemma~\ref{lem:cubic-giant} states that  a randomly chosen
cubic planar graph \aas contains a giant component of size $n-O(1)$,
and using exact counts it is easy to show that the number of small
components has a non-degenerate distribution. Since for $\ell=O(k^{1/3})$
we can count graphs up to a constant factor, a similar statement is true
also for supercubic weighted multigraphs.
Now (iv) follows from the fact that
the trees rooted in one edge of the kernel have in average  $\Theta(k/\ell)$
vertices altogether.
\end{proof}

Unfortunately, since we can only estimate the number of supercubic graphs
up to a factor of  $\exp(\sqrt{\ell^3/k})$, we cannot prove the assertion
of Theorem~\ref{thm:complex}(iv) in the case when $\ell\gg k^{1/3}$.
Nonetheless, we think that it is true also in much wider range
and that the following conjecture holds.

\medskip

{\bf Giant Conjecture.} {\sl The assertion of  Theorem~\ref{thm:complex}(iv)
holds for every $\ell\le k$.}

\section{Evolution of planar graphs}
In this section we  derive the asymptotic number $\pl(n,M)$
of labeled planar graphs with $n$ vertices and $M$ edges and investigate
how the size of the largest component in $P(n,M)$, its excess and
the size of its core change with $M$.

Throughout the section, we let $\gamma$, $g$ be the constants for which
the assertion of  Theorem~\ref{thm:cubic-asymptotics} holds.
By $L_j(n,M)$ we denote the number of vertices in the $j$-th largest
component of $P(n,M)$.
Let $\ex_{\rm c}(n,M)$ (resp. $\Pcr_{\rm c}(n,M)$) stand for the excess
(resp. the number of vertices in the core) of the subgraph of $P(n,M)$
which consists of its complex
components, and let  $L_{\rm c}(n,M)$ denote its size. Finally,
let $\ex(n,M)$ and $\Pcr(n,M)$ denote the excess and
the size of the core of the largest component of $P(n,M)$, respectively.

Before studying $P(n,M)$ we recall in the next section some
properties of the uniform random graph $G(n,M)$
which are relevant for our argument.

\subsection{Properties of the uniform random graph}
Let $\bar L_j(n,M)$ denote the number of vertices
in the $j$-th largest component of $G(n,M)$,
let $\Gex(n,M)$ stand for the excess of the largest component
of $G(n,M)$, and let $\Gcr(n,M)$ be the number of vertices in the core
of the largest component of $G(n,M)$.

The following results on the largest components were proved
by {\L}uczak~\cite{L,L96} and {\L}uczak, Pittel, and Wierman~\cite{LPW}
(see also Janson, {\L}uczak, and Ruci\'nski~\cite{JLR}).

\begin{theorem}[Subcritical phase]\label{thm:subGnM}
Let $M=n/2+s$, where $s=s(n)$.
If $s^3/n^2\to -\infty$, then for $j$ fixed, \aas
$\bar L_j(n,M)=(1/2+o(1))\frac{n^2}{s^2} \log \frac{|s|^3}{n^2}$.
Furthermore, \aas the $j$-th largest component of $G(n,M)$ is a tree.
\end{theorem}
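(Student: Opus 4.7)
The plan is to establish both claims by a first/second-moment analysis on tree components of $G(n,M)$, together with a tail bound on cyclic components of comparable size. First I would pass to $G(n,p)$ with $p=\lambda/n$, $\lambda=2M/n=1-\eps$ and $\eps=-2s/n>0$, so that the hypothesis $s^3/n^2\to-\infty$ reads $n\eps^3\to\infty$; the conversion back to $G(n,M)$ costs only a standard $\Theta(\sqrt{n})$ factor via the local limit theorem for the edge count, which will be absorbed in the Poisson window.

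Let $T_k$ be the number of tree components on $k$ vertices in $G(n,p)$. Cayley's formula combined with Stirling gives
\[
\E T_k\;=\;\binom{n}{k}k^{k-2}p^{k-1}(1-p)^{k(n-k)+\binom{k}{2}-(k-1)}\;\sim\;\frac{n}{\sqrt{2\pi}\,k^{5/2}}\bigl(\lambda e^{1-\lambda}\bigr)^k,
\]
and since $\lambda e^{1-\lambda}=1-\eps^2/2+O(\eps^3)$, one gets $\E T_k\sim (n/\sqrt{2\pi}\,k^{5/2})\exp(-\tfrac12 k\eps^2(1+O(\eps)))$. Consecutive terms of the tail sum $\sum_{k\ge a}\E T_k$ differ by the factor $\exp(-\eps^2/2)$, so the tail is of order $\eps^{-2}\E T_a$; requiring it to be of order $1$ gives the balance $\tfrac12 a\eps^2=\log(n/\eps^2)-\tfrac52\log a+O(1)$. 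Substituting the ansatz $a=c(n^2/s^2)\log(|s|^3/n^2)$ and separately matching the coefficients of $\log n$ and $\log|s|$ forces $c=1/2$, identifying the target scale $k^\star:=(1/2+o(1))(n^2/s^2)\log(|s|^3/n^2)$.

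Concentration follows from the method of factorial moments applied to $N_{\ge a}:=\sum_{k\ge a}T_k$: the probability that two disjoint vertex sets simultaneously span tree components factorises up to $O(k^2/n)$ correction factors, so $\E[N_{\ge a}(N_{\ge a}-1)\cdots(N_{\ge a}-r+1)]\sim(\E N_{\ge a})^r$ for every fixed $r$, and $N_{\ge a}$ converges in distribution to $\mathrm{Poisson}(\E N_{\ge a})$. Taking $a=k^\star\pm\omega(n)/\eps^2$ with $\omega(n)\to\infty$ slowly sends $\E N_{\ge a}$ to $\infty$ and to $0$ respectively, pinning the size of the $j$-th largest tree component to $k^\star(1+o(1))$ for every fixed $j$. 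To upgrade this to the $j$-th largest component outright, observe that the expected number of unicyclic components of size $k$ is $\sim(\lambda e^{1-\lambda})^k/(4k)$, so its ratio to $\E T_k$ is $\Theta(k^{3/2}/n)$, which at $k=k^\star$ is $\Theta((n^2/|s|^3)(\log(|s|^3/n^2))^{3/2})=o(1)$ by hypothesis. A union bound over $k\ge(1-o(1))k^\star$, together with the analogous and even smaller bounds for components of excess $\ge 2$ (obtained from the standard count of connected graphs with $k+\ell$ edges), rules \aas any cyclic component of comparable size out.

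The main obstacle, in my view, is pinning the leading constant $1/2$: it requires careful bookkeeping of which $\log n$ and $\log|s|$ terms survive in the balance equation above uniformly as $\eps$ ranges over $n^{-1/3}\ll\eps\ll 1$, and simultaneously keeping the Poisson window $1/\eps^2$ small compared to the fluctuation scale of $k^\star$ (the window spans $n^{2/3}$ down to $o(n)$ across the stated regime). Once the sharp first-moment estimate is in hand, the remaining ingredients — factorial moments, contiguity between $G(n,p)$ and $G(n,M)$, and the tree confirmation — are standard.
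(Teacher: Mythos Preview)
The paper does not prove this theorem; it is quoted verbatim as a background result from {\L}uczak~\cite{L,L96}, {\L}uczak, Pittel, and Wierman~\cite{LPW}, and the monograph~\cite{JLR}. There is therefore no ``paper's own proof'' to compare against.

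That said, your sketch is essentially the classical argument one finds in those references: compute $\E T_k$ via Cayley's formula, identify the scale $k^\star$ at which the tail $\sum_{k\ge a}\E T_k$ is of order~$1$, use factorial moments to obtain Poisson convergence of the number of large tree components, and rule out non-tree components of comparable size by comparing their expected count to $\E T_k$. The determination of the constant $1/2$ from the balance $\log n-2\log\eps-\tfrac52\log a-\tfrac12 a\eps^2=O(1)$ is correct, and the window $\omega(n)/\eps^2=o(k^\star)$ is indeed narrow enough because the ratio is $O(1/\log(|s|^3/n^2))$.

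One point deserves more care than you give it: the passage between $G(n,p)$ and $G(n,M)$. The phrase ``costs only a standard $\Theta(\sqrt{n})$ factor via the local limit theorem'' is not quite right for this purpose. The clean way is to work directly in $G(n,M)$ (the moment computations go through unchanged with hypergeometric rather than binomial edge probabilities), or to observe that the event $\{L_j\ge a\}$ is monotone increasing so that $\pr_{G(n,M)}(L_j\ge a)$ is sandwiched between the corresponding $G(n,p)$ probabilities at $p_\pm=M/\binom{n}{2}\pm\omega/n$; neither route introduces any multiplicative factor that needs to be ``absorbed''. Apart from this, the proposal is sound.
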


\begin{theorem}[Critical phase]\label{thm:criticalGnM}
Let $M=n/2+s$, where $s=s(n)$.
If $s^3/n^2\to c$, then for $j$ fixed, \aas $\bar L_j(n,M)=\Theta(n^{2/3})$.
The total excess of the complex components of $G(n,M)$ is \aas $O(1)$,
and the probability that the $j$-th largest component
of $G(n,M)$ has excess $h$
is bounded away from zero for every fixed $j=1,2,\dots$,
and $h=-1,0,1,\dots$.

Furthermore, if $G(n,M)$ contains some complex components, then
\aas they have $\Theta(n^{2/3})$ vertices in total
and $\Gcr(n,M)=\Theta(n^{1/3})$.
\end{theorem}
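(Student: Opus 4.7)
The plan is to proceed by explicit enumeration followed by Poisson approximation. Let $C^{\mathrm{conn}}(k,k+\ell)$ denote the number of connected labeled graphs on $k$ vertices with $k+\ell$ edges. The expected number of components of $G(n,M)$ with $k$ vertices and $k+\ell$ edges equals
\begin{equation*}
\binom{n}{k}\, C^{\mathrm{conn}}(k,k+\ell)\,
\frac{\binom{\binom{n-k}{2}}{M-k-\ell}}{\binom{\binom{n}{2}}{M}}.
\end{equation*}
In the critical window $M=n/2+s$ with $s^3/n^2\to c$, setting $k=xn^{2/3}$ and applying Stirling's formula converts the ratio of binomials into $\exp(-\Psi_c(x)+o(1))$ for an explicit function $\Psi_c$ that is integrable on $(0,\infty)$ and blows up at $0$ and at $\infty$. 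Consequently, essentially all of the expectation is carried by $k=\Theta(n^{2/3})$.

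For trees ($\ell=-1$) Cayley's formula gives $C^{\mathrm{conn}}(k,k-1)=k^{k-2}$, and integrating the resulting density shows that the expected number of tree-components of size between $\varepsilon n^{2/3}$ and $\varepsilon^{-1}n^{2/3}$ is bounded below by a positive constant depending on $\varepsilon$ and $c$. For unicyclic components ($\ell=0$) the analogous count $\tfrac12\sum_{j\ge 3}\binom{k}{j}\tfrac{(j-1)!}{2}k^{k-j-1}$ yields a strictly smaller but still positive expectation of components of size $\Theta(n^{2/3})$. For complex components with fixed excess $h\ge 1$, Wright's asymptotic $C^{\mathrm{conn}}(k,k+h)=\Theta(k^{k+(3h-1)/2})$ shows that the expected number of such components with $k=\Theta(n^{2/3})$ vertices is $\Theta(1)$, and that the contribution from $h\to\infty$ is summable. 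A second-moment or Chen--Stein calculation promotes these first moments to joint Poisson limits for the numbers of components in each $(\text{size},\text{excess})$-class, giving both the $\Theta(n^{2/3})$ bound on $\bar L_j(n,M)$ and the claim that, for fixed $j$ and $h\ge -1$, the probability that the $j$-th largest component has excess $h$ is bounded away from zero.

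For the core, observe that a complex component with $k$ vertices and bounded excess $\ell=O(1)$ has a kernel on at most $2\ell$ vertices and $3\ell$ edges; the core is obtained by subdividing these kernel edges, to which tree-pieces are then attached. Conditioning on component size $k=\Theta(n^{2/3})$, the subdivision lengths are asymptotically distributed like the branch lengths of a uniform random labeled tree with $O(1)$ marked internal degree-three vertices, each branch being of length $\Theta(\sqrt{k})=\Theta(n^{1/3})$. Summing over the $O(1)$ kernel edges gives $\Gcr(n,M)=\Theta(n^{1/3})$ \aas, and the first-moment bound on total excess together with Markov's inequality yields $\ex_{\rm c}(n,M)=O(1)$ \aas, which in turn forces the total number of vertices in complex components to be $\Theta(n^{2/3})$ conditional on their existence.

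The main technical obstacle is the passage from first moments to the joint distributional statements: one must establish that the numbers of components in disjoint size/excess classes are asymptotically independent Poisson variables. This requires a careful second-moment computation showing that removing one candidate component of size $O(n^{2/3})$ perturbs the conditional distribution of the remaining graph by only a factor $1+o(1)$, which in turn rests on the smooth dependence of the ratio of binomials above on $k$ and $\ell$ throughout the relevant range. A secondary subtlety is ruling out components of size $\omega(n^{2/3})$, for which a union bound using Wright-type estimates for $C^{\mathrm{conn}}(k,k+\ell)$ uniform in $\ell\ge -1$ suffices.
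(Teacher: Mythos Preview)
The paper does not prove this theorem at all: Theorem~\ref{thm:criticalGnM} is stated as background about the standard random graph $G(n,M)$ and is attributed to {\L}uczak~\cite{L,L96} and {\L}uczak, Pittel, and Wierman~\cite{LPW} (with a pointer to~\cite{JLR}). It is used later only as an input to the analysis of $P(n,M)$, so there is no ``paper's own proof'' to compare against.

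That said, your sketch is the right one and is essentially the strategy carried out in those references (and, in more detail, in~\cite{JKLP}): compute expected counts of components of size $k$ and excess $\ell$ via Wright's coefficients, localise to $k=\Theta(n^{2/3})$ by a Stirling/Laplace analysis, and upgrade to distributional statements by a second-moment or factorial-moment argument. Two places deserve more care than you indicate. First, Wright's asymptotic $C^{\mathrm{conn}}(k,k+h)\sim c_h k^{k+(3h-1)/2}$ is only valid for $h=o(k^{1/3})$; to rule out components with $k\gg n^{2/3}$ or with excess growing with $n$ you need the uniform upper bounds on $C^{\mathrm{conn}}(k,k+\ell)$ (as in~\cite{LPW} or Bollob\'as), not just the fixed-$h$ asymptotic. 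Second, the statement that for each fixed $j$ and each $h\ge -1$ the probability $\pr(\text{the }j\text{-th largest component has excess }h)$ is bounded away from zero does not follow from marginal Poisson limits alone; it requires the joint limit of the ordered sizes together with their excesses, which is what the full factorial-moment computation in~\cite{LPW,JKLP} delivers. Your core argument via kernel subdivision is fine and is exactly how $\Gcr(n,M)=\Theta(n^{1/3})$ is obtained in~\cite{cycle}.
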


\begin{theorem}[Supercritical phase]\label{thm:supercriticalGnM}
Let $M=n/2+s$, where $s=s(n)$.
If $s^3/n^2\to \infty$, then \aas $\bar L_1(n,M)=(4+o(1))s$,
while for $j\ge 2$ fixed,
$\bar L_j(n,M)=(1/2+o(1))\frac{n^2}{s^2} \log \frac{s^3}{n^2}$.
Moreover, \aas the $j$-th largest component of $G(n,M)$ is a tree,
provided $j\ge 2$.
\end{theorem}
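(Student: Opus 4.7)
The plan is to deduce this classical supercritical-phase result from the subcritical companion Theorem~\ref{thm:subGnM} via a discrete-duality argument, following {\L}uczak, Pittel, and Wierman~\cite{LPW}. First I would transfer from $G(n,M)$ to the Bernoulli model $G(n,p)$ with $p=M/\binom n 2$; the two are contiguous in the relevant regime, so \aas statements on component sizes move freely between them. Setting $\eps=2s/n$ one has $np=1+\eps+O(1/n)$, so the hypothesis $s^3/n^2\to\infty$ becomes the standard ``weakly supercritical'' condition $\eps^3 n\to\infty$; one implicitly assumes also $s=o(n)$, since otherwise the right-hand side of the claimed asymptotics for $\bar L_1$ would exceed $n$.

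To pin down the giant I would couple the breadth-first exploration started at a uniform random vertex with a Galton--Watson branching process of Poisson$(\lambda)$ offspring, $\lambda=np$. Its survival probability $\pi$ satisfies $1-\pi=e^{-\lambda\pi}$, and Taylor expansion near $\lambda=1$ yields $\pi=2\eps+O(\eps^2)$. A first-moment computation then shows that the expected number of vertices lying in ``large'' exploration trees is $n\pi=(4+o(1))s$; a second-moment computation together with a sprinkling argument (any two such large trees are \aas merged by a few extra random edges) implies that \aas a single component of size $(4+o(1))s$ absorbs all of them. Since $s\gg n^{2/3}\gg\sqrt n$, the fluctuations of $L_1$, which are of order $\sqrt n$, are negligible on the scale $s$.

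For the remaining components I would invoke the discrete-duality principle: conditional on the giant $C_1$ spanning a vertex set $V_1$ with $|V_1|=L_1$ and containing $e_1$ edges, the subgraph induced on $V\setminus V_1$ is uniform among graphs on $n'=n-L_1$ labeled vertices with $M'=M-e_1$ edges whose components are all of sub-giant size. The excess of $C_1$ is \aas of order $\eps^3 n=\Theta(s^3/n^2)$, hence $o(s)$ as long as $s=o(n)$; therefore $e_1=L_1+o(s)$ and $M'-n'/2=-s+o(s)$. The leftover thus lies in the subcritical regime of Theorem~\ref{thm:subGnM} with parameter $s'$ of absolute value $s+o(s)$ and $|s'|^3/(n')^2\to\infty$; applying that theorem gives $\bar L_j(n,M)=(1/2+o(1))\frac{n^2}{s^2}\log\frac{s^3}{n^2}$ for every fixed $j\ge 2$, and the additional information that these $j$-th largest components are \aas trees.

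The main obstacle is the duality step itself. One has to show that after conditioning on the random values $L_1$ and $e_1$ the complement is genuinely uniform, and to control $e_1-L_1$ sharply: an error $o(s)$ is needed, whereas the naive bound on the excess of $C_1$ in the window $s=\Theta(n^{2/3})$ is of the same order as $s^3/n^2$, which is merely $\Theta(1)$ and requires care to keep from accumulating. The cleanest route, already standard in the random-graph literature, is to expose $C_1$ in stages (first its kernel, then the pendant subtrees hanging off the core, and finally the extra edges internal to $C_1$), checking at each step that the conditional distribution of the complement remains uniform among graphs with the right vertex and edge counts.
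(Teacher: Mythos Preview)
The paper does not prove this theorem at all: it is quoted as a known background result, attributed to {\L}uczak~\cite{L,L96} and {\L}uczak, Pittel, and Wierman~\cite{LPW}, and the authors even remark explicitly that ``we do not use Theorems~\ref{thm:supercriticalGnM} and~\ref{thm:GnMexcesscore} in our proofs below, but we decide to invoke them here to show how our results differ from those for $G(n,M)$.'' So there is no ``paper's own proof'' to compare against.

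That said, your sketch is a faithful outline of the standard argument in the cited references: branching-process approximation to locate the giant, sprinkling to ensure uniqueness, and the discrete-duality principle to reduce the complement to a subcritical instance handled by Theorem~\ref{thm:subGnM}. Two minor remarks. First, after conditioning on the giant, the complement is uniform among graphs on $n'$ vertices with $M'$ edges \emph{all of whose components are smaller than the giant}; you should note that this conditioning is asymptotically harmless because the subcritical bound on $\bar L_1(n',M')$ is far below $L_1$. Second, your parenthetical about the excess being ``merely $\Theta(1)$'' in the window $s=\Theta(n^{2/3})$ is out of place here, since the hypothesis is $s^3/n^2\to\infty$; the relevant point is simply that $\Gex(n,M)=\Theta(s^3/n^2)=o(s)$ whenever $s=o(n)$, which you already state correctly.
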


The structure of the giant component of $G(n,M)$
was studied by {\L}uczak~\cite{cycle}.
\begin{theorem}\label{thm:GnMexcesscore}
If $M=n/2+s$, where $s^3/n^2\to \infty$ but $s=o(n)$,
then \aas $\Gex(n,M)= (16/3+o(1)) \frac{s^3}{n^2}$
and $\Gcr(n,M)=(8+o(1)) \frac{s^2}{n}$.
\end{theorem}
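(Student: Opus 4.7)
The plan is to compute the joint distribution of $(L_1, \Gex, \Gcr)$ by direct counting and a saddle-point optimization, paralleling the argument the paper develops for planar graphs in Section~\ref{sec:complexplanar} but with the planarity constraint removed.

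By Theorem~\ref{thm:supercriticalGnM}, \aas $G(n,M)$ consists of a unique complex component of size $L_1 = (4+o(1))s$ together with a forest on the remaining vertices. Writing $v = L_1(n,M)$, $\ell = \Gex(n,M)$, $k = \Gcr(n,M)$,
\begin{equation*}
\pr(L_1 = v,\, \Gex = \ell,\, \Gcr = k) = \frac{\binom{n}{v}\, C^{\rm conn}(v, v+\ell; k)\, F(n - v,\, M - v - \ell)}{\binom{\binom{n}{2}}{M}},
\end{equation*}
where $C^{\rm conn}(v, v+\ell; k)$ counts labeled connected graphs on $v$ vertices with excess $\ell$ whose core has order $k$, and $F(n', M')$ counts labeled forests on $n'$ vertices with $M'$ edges. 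This identity simply records the decomposition giant/forest, coupled with the fact that all excess lives in the giant.

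First I would estimate $C^{\rm conn}(v, v+\ell; k)$ via the core--kernel construction (pick a cubic multigraph on $2\ell$ vertices via Wright's classical asymptotic, distribute $k-2\ell$ core vertices along the $3\ell$ kernel edges, then plant a rooted forest of $v-k$ vertices on the core); this yields a closed-form asymptotic in $v,k,\ell$ analogous to but simpler than Lemma~\ref{lem:complex-supercubickernel}. For the complement I would invoke Britikov's asymptotic for $F(n',M')$: with $n' = n-v$ and $M' = M-v-\ell$, the ratio $M'/n' = 1/2 - \Theta(s/n)$ places us in the weakly subcritical regime, and the logarithm of $F$ can then be expanded in powers of $s/n$. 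Substituting these asymptotics and canceling $\binom{\binom{n}{2}}{M}$, the log of the joint probability becomes a function of the normalized variables $x = kn/s^2$ and $y = \ell n^2/s^3$ (with $v/s$ held near $4$). The terms of lowest order in $s/n$ combine into a strictly concave function of $(x,y)$ whose unique maximum, located by elementary calculus, is $(x,y) = (8,\, 16/3)$; a second-derivative estimate then confines the mass to a window of relative width $o(1)$ in both $x$ and $y$, delivering the \aas statement.

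The main obstacle is the saddle-point computation: the values $16/3$ and $8$ emerge only after a cancellation between the giant-side terms and the Britikov expansion at the cubic order in $s/n$, so each of the three ingredients must be expanded to that precision and the Hessian must be controlled to verify strict concavity on the relevant domain. A secondary difficulty is handling the $L_1 = v$ conditioning: one must either jointly saddle in $v$ (thereby rediscovering $v = (4+o(1))s$) or simply quote the tight estimate from Theorem~\ref{thm:supercriticalGnM} and argue that fluctuations of $v$ of order $o(s)$ do not shift the optimal $(\ell,k)$, which requires checking that the mixed partials $\partial^2/\partial v\partial\ell$ and $\partial^2/\partial v\partial k$ of the log-probability are sufficiently small.
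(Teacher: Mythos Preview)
The paper does not prove this statement at all. Theorem~\ref{thm:GnMexcesscore} is quoted in Section~5.1 as a known result from {\L}uczak~\cite{cycle}, and the authors explicitly write that they ``do not use Theorems~\ref{thm:supercriticalGnM} and~\ref{thm:GnMexcesscore} in our proofs below, but \dots\ invoke them here to show how our results differ from those for $G(n,M)$.'' There is therefore no proof in the paper to compare your proposal against.

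That said, your plan is essentially the argument of the original reference~\cite{cycle}: decompose the giant into kernel, core, and attached rooted forest, use Wright's asymptotic for the number of (unrestricted) weighted cubic multigraphs in place of the planar count, and optimize. So the approach is sound and standard. Two technical points are worth flagging. First, the complement of the giant in $G(n,M)$ is not \aas a forest: it is \aas a union of trees and unicyclic components, so you should use $U(n-v,M-v-\ell)$ (as in~\eqref{eq:main}) rather than $F(n-v,M-v-\ell)$; the difference is a bounded multiplicative factor and does not affect the saddle location, but your invocation of ``Britikov's asymptotic for $F$'' is the wrong object. Second, in the non-planar case the kernel count is not given by Theorem~\ref{thm:cubic-asymptotics} but by Wright's formula, whose growth rate in $\ell$ is $(e/(12\ell))^{\ell/2}$ rather than $(\gamma^2 e^{3/2}/3^{3/2})^\ell \ell^{-3\ell/2}$; this changes the exponent function $b(\ell)$ and is precisely what produces the constants $16/3$ and $8$ instead of the $2\gamma^{4/3}/3$ and $2\gamma^{2/3}$ of Theorem~\ref{thm:super}. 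Make sure you carry out the computation with the correct kernel asymptotic.
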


The following threshold for the property that $G(n,M)$ is planar
 was proved by {\L}uczak, Pittel, and Wierman~\cite{LPW}.
\begin{theorem}[Planarity]\label{thm:planariy}
Let $M=n/2+cn^{2/3}$ for a constant $c$.
Then the probability that $G(n,M)$ is planar tends to a limit $\varphi(c)$
as $n\to \infty$, where  $0<\varphi(c)<1$,
$\lim_{c\to -\infty} \varphi(c)=1$, and $\lim_{c\to \infty} \varphi(c)=0$.
\end{theorem}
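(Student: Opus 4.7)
The plan is to reduce planarity of $G(n,M)$ to planarity of a small random multigraph and then evaluate the limit explicitly. Recall that $G$ is planar iff $\ker(G)$ is planar (subdividing edges and attaching rooted trees preserves planarity in both directions), so $G(n,M)$ is planar iff the union of kernels of its complex components is planar; the tree and unicyclic components are automatically planar. First I would invoke Theorem~\ref{thm:criticalGnM}: in the critical window $M = n/2 + cn^{2/3}$, the complex components of $G(n,M)$ have $O(1)$ total excess \aas, and for each $h = 0,1,2,\dots$ the probability that this total excess equals $h$ converges to a strictly positive limit $q_h(c)$ summing to $1$. In particular $q_0(c) > 0$, and when the total excess is zero $G(n,M)$ is a disjoint union of trees and unicyclic components, hence planar, so $\varphi(c) \ge q_0(c) > 0$.

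Next I would show that, conditionally on $\ex_{\rm c}(n,M) = h \ge 1$, the kernel of the complex part is a random multigraph on $2h - d$ vertices and $3h - d$ edges (where $d$ is its deficiency), and its distribution converges as $n \to \infty$ to a limit supported on the finitely many isomorphism classes of such multigraphs. The key input is that each edge of the kernel is a degree-two path of length $\Theta(n^{1/3})$ (the total core size is $\Theta(n^{1/3})$ by Theorem~\ref{thm:criticalGnM}, while the kernel has $O(1)$ edges), so the uniform measure on completions---distributing core vertices along kernel edges and attaching a rooted forest---does not bias the kernel shape towards planar configurations. Consequently $\varphi(c)$ equals $\sum_{h} q_h(c)\, \pr(K_h \text{ is planar})$, where $K_h$ denotes the limiting kernel of total excess $h$, and each factor is a computable weighted fraction of planar multigraph shapes via the Wright-type constants of \cite{JKLP}. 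The strict inequality $\varphi(c) < 1$ follows because with positive limit probability the kernel is a single $K_{3,3}$ (an excess-$3$ configuration on six vertices), which is non-planar.

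For the boundary behavior, as $c \to -\infty$ Theorem~\ref{thm:subGnM} gives that $G(n,M)$ has no complex component \aas, hence is planar, so $\varphi(c) \to 1$. As $c \to \infty$, interpolating between Theorems~\ref{thm:criticalGnM} and~\ref{thm:supercriticalGnM} shows that the total excess of the complex part tends to infinity in probability, so the kernel has diverging order; a uniform cubic multigraph on $N \to \infty$ vertices contains a topological $K_{3,3}$ \aas, as a second-moment computation on the number of labeled $K_{3,3}$-subdivisions confirms, and therefore $\varphi(c) \to 0$.

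The main obstacle is the middle step: identifying the precise limiting distribution $q_h(c)$ of the total excess and the conditional law of the complex kernel given $\ex_{\rm c}(n,M) = h$. This requires tracking the joint behavior of the generating functions of trees, unicyclic components, and complex components through the critical window, along the lines of the singularity analysis at $x = e^{-1}$ carried out in \cite{JKLP}, and verifying that the conditional weight assigned to each multigraph shape in the limit is proportional to its combinatorial weight, so that the planarity probability factorizes into a clean expression. A technical subtlety is that to extract $\varphi(c)$ one must treat degenerate kernels with loops and multiple edges (deficiency $d > 0$) on the same footing as cubic simple kernels, which is precisely the bookkeeping developed in Section~\ref{sec:cubicplanar}.
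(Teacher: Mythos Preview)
The paper does not prove this statement: Theorem~\ref{thm:planariy} is quoted as a known result of {\L}uczak, Pittel, and Wierman~\cite{LPW}, and the paper notes that an alternative proof (together with numerical bounds on $\varphi(0)$) appears in Janson, Knuth, {\L}uczak, and Pittel~\cite{JKLP}. There is therefore no in-paper argument to compare against.

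That said, your sketch follows the same line as the original proofs in \cite{LPW,JKLP}: reduce planarity of $G(n,M)$ to planarity of the kernel of the complex part, establish a limiting distribution $(q_h(c))_{h\ge 0}$ for the total excess in the critical window, and identify the conditional law of the kernel given the excess via the Wright--Stepanov weights. You have correctly located the real work (your ``main obstacle''): pinning down the joint limit of the excess distribution and the conditional kernel law is exactly what the generating-function analysis at the singularity in \cite{JKLP} accomplishes, and it is not a step one can wave through.

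One genuine gap concerns the boundary behavior. Theorems~\ref{thm:subGnM} and~\ref{thm:supercriticalGnM} describe the regimes $s^3/n^2\to\pm\infty$, which is \emph{not} the same as first taking $n\to\infty$ with $c$ fixed to obtain $\varphi(c)$ and then letting $c\to\pm\infty$. Invoking the sub- or supercritical theorems directly conflates these two limits. To prove $\lim_{c\to\infty}\varphi(c)=0$ you need either an explicit expression for $\varphi(c)$ in terms of the $q_h(c)$ and the planar fraction among excess-$h$ kernels (and then show this sum tends to zero as $c\to\infty$), or a uniform estimate valid throughout the critical window; the supercritical asymptotics alone do not suffice. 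Likewise, your claim that ``a uniform cubic multigraph on $N\to\infty$ vertices is \aas non-planar'' must be applied to the correctly \emph{weighted} kernel distribution, and you need that the total excess (not merely the kernel order) diverges in probability as $c\to\infty$ within the window --- again a statement about $q_h(c)$ rather than about the supercritical phase.
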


A different proof of the above result can be found in Janson
{\it et al.}~\cite{JKLP}, who also showed that $0.987<\varphi(0)<0.9998$.

We shall use Theorems~\ref{thm:subGnM},~\ref{thm:criticalGnM}
and~\ref{thm:planariy} in the proofs of Theorems~\ref{thm:subcritical}
and~\ref{thm:critical}.
We do not use Theorems~\ref{thm:supercriticalGnM}
and~\ref{thm:GnMexcesscore} in our proofs below, but
we decide to invoke them here to show
how our results differ from those for $G(n,M)$.

For a constant $c\in (-\infty,\infty)$, let us define
\begin{equation}\label{def:nu}
\nu(c)=\sqrt{\frac{2}{3\pi}}e^{-4c^3/3}
\sum_{r=0}^\infty \frac{(-9c^3)^{r/3}}{r!}
\Gamma\left(\frac{2r}3+\frac12\right)\cos\frac{\pi r}{3}\;.
\end{equation}
Note that $\nu(c)$ decreases monotonically with $\nu(c)\to 1$ as
$c \to -\infty$ and
$\nu(c)\le \exp(-(4+o(1))c^3/3)$ for large $c$.
In our argument we use also the following result
of Britikov~\cite{Britikov}.
Here and below by $\rho(n,M)$ we denote the probability
that $G(n,M)$ contains no complex component.
In other words, $\rho(n,M)=U(n,M)/\binom{\binom n2}{M}$,
where $U(n,M)$ denotes the number of labeled graphs
with $n$ vertices
and $M$ edges, which contain no complex components.

\begin{theorem}\label{thm:Britikov}
Let $M=n/2+s$, where $s=s(n)$. Then the following holds.
\begin{enumerate}
\item If $s^3/n^2\to -\infty$, then
$\rho(n,M)=1+O(n^2/|s|^3)$.

\item If $s^3/n^2\to c$, where $c$ is a (not necessarily
         positive) constant,
then $\rho(n,M)=(1+o(1))\nu(c^{1/3})$.
\item If $s^3/n^2\to \infty$, then
$\rho(n,M)\le \exp(-{s^3}/{n^2})$.
\end{enumerate}
\end{theorem}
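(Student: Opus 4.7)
The plan is to derive a closed-form expression for $U(n,M)$ via the exponential generating functions of trees and unicyclic components, and then extract its asymptotics by a saddle-point analysis tuned to each of the three regimes.

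Step one is the generating-function setup. Let $T(x)=\sum_{k\ge 1}(k^{k-1}/k!)\,x^k$ be the tree EGF (satisfying $T=xe^T$), and let $W(x)=\tfrac12\log\tfrac{1}{1-T(x)}-T(x)/2-T(x)^2/4$ be the classical EGF for connected unicyclic graphs. Any graph counted by $U(n,M)$ decomposes uniquely into tree components (of excess $-1$) and unicyclic components (of excess $0$), so the number of tree components is forced to equal $c_t=n-M=n/2-s$. Consequently
\[
\frac{U(n,M)}{n!}\;=\;[x^n u^{c_t}]\exp\bigl(u\,T(x)+W(x)\bigr),
\]
which by Cauchy's formula becomes a double contour integral in $x$ and $u$.

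Step two is the saddle-point evaluation. The natural substitution is $T=T(x)$, under which $dx/x=((1-T)/T)\,dT$; collecting exponents produces a phase whose critical point lies near $T=1$, i.e.\ near the branch point $x=e^{-1}$ of the tree EGF, precisely when $s=\Theta(n^{2/3})$. In the subcritical regime $s^3/n^2\to-\infty$ the saddle sits bounded away from the singularity, a Gaussian expansion applies, and the leading order yields $\rho(n,M)\to 1$ with correction $O(n^2/|s|^3)$, matching the expected number of unicyclic components. In the supercritical regime $s^3/n^2\to\infty$ the saddle is pushed past the branch point, and comparing the supremum of the integrand with the trivial denominator $\binom{\binom{n}{2}}{M}$ produces the exponential loss $\exp(-s^3/n^2)$.

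The main obstacle is the critical case $s^3/n^2\to c$, where the saddle and the square-root singularity $1-T(x)\sim\sqrt{2(1-ex)}$ coalesce and a naive Gaussian approximation fails. The standard remedy is to deform the contour into a Hankel-type path wrapping the branch cut, rescale $1-T\sim n^{-1/3}\tau$ so that the phase becomes cubic in $\tau$, and evaluate the resulting Airy-type integral. Expanding the integrand in powers of $\tau$ and integrating term by term against the Hankel path produces precisely the series in (\ref{def:nu}): the prefactor $\sqrt{2/(3\pi)}\,e^{-4c^3/3}$ captures the leading Gaussian together with the cubic shift of the saddle, while the coefficients $(-9c^3)^{r/3}\Gamma(2r/3+1/2)\cos(\pi r/3)/r!$ arise from pairing Taylor expansions of $uT+W$ near the singularity with the Hankel kernel. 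Once this critical computation is in place, the subcritical and supercritical bounds follow as controlled perturbations in which the cubic term becomes, respectively, negligible or dominant.
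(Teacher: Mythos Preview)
The paper does not prove this theorem at all: it is quoted as a known result of Britikov~\cite{Britikov} (see also Janson, Knuth, {\L}uczak, and Pittel~\cite{JKLP}) and used as a black box throughout Section~5. So there is no ``paper's own proof'' to compare against.

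That said, your outline is essentially the standard route by which this result is actually established. A couple of minor remarks. First, the $u$-variable is superfluous: since unicyclic components have excess $0$, fixing $n$ and the number $c_t=n-M$ of tree components already pins down $M$, so one may extract $[u^{c_t}]$ trivially and work with the single-variable coefficient
\[
\frac{U(n,M)}{n!}=\frac{1}{(n-M)!}\,[x^n]\,e^{W(x)}\,T(x)^{\,n-M},
\]
which is how Britikov sets things up. Second, your sketch of the critical case is correct in spirit---the coalescence of the saddle with the square-root singularity of $T$ at $x=e^{-1}$ does force the cubic rescaling and the Hankel contour, and this is precisely what produces the Airy-type series defining $\nu$---but be aware that turning this into an honest proof (uniform control of tails, matching the constants in \eqref{def:nu}, and handling the transition between regimes) is a substantial calculation; in~\cite{JKLP} it occupies several pages. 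As a plan it is fine; as a proof it would need those details filled in.
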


\subsection{The formula for $\pl(n,M)$}
The main ingredient of our argument is a simple observation
that each  graph can be uniquely decomposed into the complex part and
the remaining part  which consists of isolated tees and unicyclic
components. Moreover, it is the complex part which determines
whether the graph is planar.
Consequently,  the number $\pl(n,M)$ of labeled planar graphs
on $n$ vertices with $M$ edges is given by
\begin{equation}\label{eq:main}
\pl(n,M) = \sum_{k,\ell}\binom{n}{k}C(k,k+\ell)U(n-k,M-k-\ell).
\end{equation}
Thus, the estimate of $C(k,k+\ell)$ (Theorem~\ref{thm:complex})
and that of $U(n-k,M-k-\ell)$ (Theorem~\ref{thm:Britikov}) yield
the asymptotic estimate of $\pl(n,M)$. Moreover, the leading terms
of (\ref{eq:main}) give us information on the size of the complex part
of the graph and thus, by Theorem~\ref{thm:complex}, on the size
of the largest component of $P(n,M)$ and its internal structure.
On the other hand, the size of the largest non-complex
component can be deduced from Theorems~\ref{thm:subGnM}
and~\ref{thm:criticalGnM}.

\subsection{Subcritical phase}\label{sec:subcritical}
The behavior of $P(n,M)$ in the subcritical case follows directly
from Theorem~\ref{thm:subGnM}.

\begin{theorem}\label{thm:subcritical}
Let $M=n/2+s$, where $s=o(n)$.
If $s^3/n^2\to -\infty$, then
\begin{equation*}
\pl(n,M)
=(1+o(1))\frac{n^{n+2s}e^{n/2+s-1/2}}{\sqrt \pi (n+2s)^{n/2+s+1/2}}.
\end{equation*}
Furthermore,  for fixed $j$
\aas $L_j(n,M)=(1/2+o(1))\frac{n^2}{s^2} \log \frac{|s|^3}{n^2}$
and the $j$-th largest component of $P(n,M)$ is a tree.
\end{theorem}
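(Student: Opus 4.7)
The plan rests on two elementary observations. First, every graph on $n$ vertices with $M$ edges whose components are all trees or unicyclic is planar, so
\[
U(n,M)\;\le\;\pl(n,M)\;\le\;\binom{\binom n 2}{M}.
\]
Second, Theorem~\ref{thm:Britikov}(i) asserts, in the subcritical regime $s^3/n^2\to -\infty$, that $\rho(n,M)=U(n,M)/\binom{\binom n 2}{M}=1+O(n^2/|s|^3)=1+o(1)$. Combining these, the sandwich collapses and $\pl(n,M)=(1+o(1))\binom{\binom n 2}{M}$. The asserted closed form is then obtained by a Stirling expansion of $\binom{\binom n 2}{M}$ with the substitution $M=n/2+s$ together with $s=o(n)$, followed by routine algebraic simplification.

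For the structural statement, the same sandwich yields $\pr(G(n,M)\text{ is planar})=\pl(n,M)/\binom{\binom n 2}{M}\to 1$. Since $P(n,M)$ is distributed as $G(n,M)$ conditioned on being planar, the total-variation distance between the two laws tends to zero, so every \aas property of $G(n,M)$ is inherited by $P(n,M)$. Applying Theorem~\ref{thm:subGnM} to $G(n,M)$, I conclude that for each fixed $j$ the $j$-th largest component of $P(n,M)$ is \aas a tree of order $(1/2+o(1))(n^2/s^2)\log(|s|^3/n^2)$.

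The substantive input is Britikov's estimate, which is used as a black box; the rest of the argument is very short. The only mildly technical step is the Stirling expansion of $\binom{\binom n 2}{M}$: because the relevant range of $s$ is $n^{2/3}\ll|s|=o(n)$, secondary quantities such as $s^2/n$ and $|s|^3/n^2$ appearing in the expansion may be unbounded, so one has to expand $\log(1-j/N)$, $\log(1+2s/n)$ and $\log(1-1/n)$ to enough depth to guarantee an additive error of $o(1)$ in $\log\pl(n,M)$. This is the main technical hurdle, but it is routine once one is careful to keep all non-vanishing terms.
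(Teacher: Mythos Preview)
Your proposal is correct and follows essentially the same route as the paper: both arguments reduce to showing $\pr(G(n,M)\text{ is planar})\to 1$ in the subcritical regime, deduce $\pl(n,M)=(1+o(1))\binom{\binom n2}{M}$, and then inherit the component structure from Theorem~\ref{thm:subGnM}. The only cosmetic difference is that the paper invokes Theorem~\ref{thm:planariy} directly for planarity of $G(n,M)$, whereas you obtain it via the sandwich $U(n,M)\le\pl(n,M)\le\binom{\binom n2}{M}$ and Britikov's estimate (Theorem~\ref{thm:Britikov}(i)); both are equally valid black boxes here.
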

\begin{proof}
Theorem~\ref{thm:planariy} states that \aas $G(n,M)$ is planar.
Therefore, we have
\begin{equation*}
\pl(n,M)
=(1+o(1)) \binom{\binom n2}{M}
=(1+o(1))\frac{n^{n+2s}e^{n/2+s-1/2}}{\sqrt \pi (n+2s)^{n/2+s+1/2}},
\end{equation*}
and the structure of $P(n,M)$  follows from Theorem~\ref{thm:subGnM}.
\end{proof}

\subsection{Critical phase}\label{sec:critical}
The critical period is only slightly harder to deal with
than the previous one as far as we estimate $\pl(n,M)$
only up to a constant factor.

\begin{theorem}\label{thm:critical}
Let $M=n/2+s$. If $s^3/n^2\to c$ for a constant $c\in (-\infty,\infty)$,
then
\begin{equation*}
\pl(n,M)
=\Theta(1)\frac{n^{n+2s}e^{n/2+s-1/2}}{\sqrt \pi (n+2s)^{n/2+s+1/2}}.
\end{equation*}
Furthermore, for $j$ fixed, \aas $L_j(n,M)=\Theta(n^{2/3})$,
and, if $P(n,M)$ contains complex components, then
$L_{\rm c}(n,M)=\Theta(n^{2/3})$,  $\ex(n,M)=O(1)$,
and $\Pcr(n,M)=\Theta(n^{1/3})$.
\end{theorem}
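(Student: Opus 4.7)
My plan is to exploit Theorem~\ref{thm:planariy}, which asserts that in the critical window the probability $\pi_n := \pr(G(n,M) \text{ is planar})$ converges to $\varphi(c^{1/3}) \in (0,1)$. Since $P(n,M)$ is by definition $G(n,M)$ conditioned on planarity, the conditioning event has probability bounded away from both $0$ and $1$, so both the count $\pl(n,M)$ and the \aas structural properties of $P(n,M)$ can be read off directly from the corresponding statements for $G(n,M)$. The argument will be strictly parallel to the proof of Theorem~\ref{thm:subcritical} and requires no input from Sections~\ref{sec:cubicplanar}--\ref{sec:complexplanar}.

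For the enumeration formula I would write
\[
\pl(n,M) \;=\; \pi_n \binom{\binom{n}{2}}{M} \;=\; \bigl(\varphi(c^{1/3})+o(1)\bigr)\binom{\binom{n}{2}}{M} \;=\; \Theta(1)\,\binom{\binom{n}{2}}{M}
\]
and then invoke the same Stirling-based evaluation of $\binom{\binom{n}{2}}{M}$ already performed in the proof of Theorem~\ref{thm:subcritical}, which yields the stated closed form.

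For the structural claims the key tool is the transfer inequality
\[
\pr\bigl(P(n,M)\in A\bigr) \;=\; \pr\bigl(G(n,M)\in A \,\bigm|\, \text{planar}\bigr) \;\le\; \pi_n^{-1}\,\pr\bigl(G(n,M)\in A\bigr),
\]
valid for every event $A$. Because $\pi_n=\Theta(1)$, every \aas property of $G(n,M)$ automatically transfers to $P(n,M)$, so Theorem~\ref{thm:criticalGnM} immediately yields $L_j(n,M)=\Theta(n^{2/3})$ \aas for every fixed $j$. For the claims conditional on the event $B_n$ that $P(n,M)$ contains complex components, I would apply the same inequality to the event $A_n=\{L_{\rm c}(n,M)\ne\Theta(n^{2/3}) \text{ or } \ex(n,M)\ne O(1) \text{ or } \Pcr(n,M)\ne\Theta(n^{1/3})\} \cap B_n$; Theorem~\ref{thm:criticalGnM} controls the total size, the total excess and the core size of the complex components of $G(n,M)$ all \aas on $B_n$, hence $\pr(G(n,M)\in A_n)=o(1)$ and therefore $\pr(P(n,M)\in A_n)=o(1)$, which is exactly the claimed conditional \aas statement (and vacuously true if $\pr(P(n,M)\in B_n)\to 0$).

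The main obstacle is really only bookkeeping: the excess bound $\ex(n,M)=O(1)$ for the \emph{largest} component of $P(n,M)$ is not stated directly in Theorem~\ref{thm:criticalGnM}, but it follows from the total-excess bound $O(1)$ for all complex components of $G(n,M)$, since the excess of the largest complex component is trivially bounded by the total. Likewise $L_{\rm c}(n,M)=\Theta(n^{2/3})$ is read off the total-size statement and $\Pcr(n,M)=\Theta(n^{1/3})$ from $\Gcr(n,M)=\Theta(n^{1/3})$. Thus the entire proof of Theorem~\ref{thm:critical} reduces to a one-step corollary of the planarity threshold (Theorem~\ref{thm:planariy}) together with the known behavior of $G(n,M)$ in the critical window (Theorem~\ref{thm:criticalGnM}).
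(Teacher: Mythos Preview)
Your proposal is correct and takes essentially the same approach as the paper: the paper likewise invokes Theorem~\ref{thm:planariy} to conclude $\pl(n,M)=\Theta(1)\binom{\binom n2}{M}$, applies the same Stirling estimate, and then simply asserts that the structural part ``is a direct consequence of Theorem~\ref{thm:criticalGnM}.'' Your write-up is just more explicit about the conditioning/transfer inequality that justifies this last step.
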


\begin{proof}
Theorem~\ref{thm:planariy} states that the probability that $G(n,M)$
is planar tends to a limit which is strictly between 0 and 1. Hence
\begin{equation*}
\pl(n,M)=\Theta(1)\binom{\binom n2}{M}
=\Theta(1)\frac{n^{n+2s}e^{n/2+s-1/2}}{(n+2s)^{n/2+s+1/2}}.
\end{equation*}
The assertion on the structure of $P(n,M)$ is a direct
consequence of Theorem~\ref{thm:criticalGnM}.
\end{proof}

\subsection{Supercritical phase}\label{sec:supercritical}
The evolution of $P(n,M)$ in the `early supercritical' period
starts to be more interesting. Note that the result below
estimates $\pl(n,M)$ up to a factor of $1+o(1)$.

\begin{theorem}\label{thm:super}
Let $M=n/2+s$, where $s=o(n)$ and $s^3/n^2\to\infty$.
Then
\begin{align*}
\pl(n,M)
=(1+o(1))&
\frac{g 3^{5/2}}{2^{7}\sqrt{\pi} \gamma^{10/3}e^{3/4}}
\frac {n^{n+11/6}}{s^{7/2}}\frac{e^{n/2-s}}{(n-2s)^{n/2-s}}
\exp\left(\frac{\gamma^{4/3}s}{n^{2/3}}\right)\\
&\quad\times  \int_{-\infty}^\infty
\exp\left(-\frac{x^3}{6}+\frac{\gamma^{4/3}x}{2}\right)
\nu\left(-\frac x2\right)dx.
\end{align*}
Furthermore, \aas $L_1(n,M)=(2+o(1))s$, while
for any fixed $j\ge 2$ we have $L_j(n,M)=\Theta(n^{2/3})$.
In addition, \aas
$\ex(n,M)= (\frac{2\gamma^{4/3}}{3}+o(1))\frac{s}{n^{2/3}}$
and $\Pcr(n,M)=(2\gamma^{2/3}+o(1)) \frac{s}{n^{1/3}}$.
\end{theorem}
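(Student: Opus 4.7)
My plan is to apply the master formula~\eqref{eq:main},
$$\pl(n,M)=\sum_{k,\ell}\binom{n}{k}\,C(k,k+\ell)\,U(n-k,M-k-\ell),$$
substitute Theorem~\ref{thm:complex}(i) for $C(k,k+\ell)$, the Britikov decomposition $U(n',M')=\rho(n',M')\binom{\binom{n'}{2}}{M'}$ via Theorem~\ref{thm:Britikov}, and Stirling~\eqref{eq:stirling} for $\binom{n}{k}$ and the inner binomial, and then carry out a two-dimensional Laplace saddle-point analysis. Setting $\partial_\ell$ of the log-summand to zero at fixed $k$ balances $(3/2)\log(k/(3\ell))+2\log\gamma$ against $-\log(n'^2/(2M'))\approx-\log n$ (since $n'\sim n$, $M'\sim n/2$ in the dominant range) and yields the $\ell$-saddle $\ell_0(k)=\gamma^{4/3}k/(3n^{2/3})$. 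The $k$-saddle in turn pins $k_0=2s(1+o(1))$, giving $\ell_0=(2\gamma^{4/3}/3+o(1))s/n^{2/3}$. In this range the error $\exp(\beta\sqrt{\ell^3/k}+O(\ell^2/k)+O(1/\ell))$ from Theorem~\ref{thm:complex}(i) is $1+o(1)$, since $\sqrt{\ell^3/k}\sim s/n$, $\ell^2/k\sim s/n^{4/3}$ and $1/\ell\sim n^{2/3}/s$ all vanish under $n^{2/3}\ll s\ll n$.

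\textbf{Laplace expansion and the integral.} I will change variables $k=2s+n^{2/3}x$ and $\ell=\ell_0(k)+\eta$ and expand the log-summand to cubic order in $x$ and quadratic in $\eta$. The $\eta$-direction is Gaussian with $\partial_\ell^2F\sim-3/(2\ell_0)$, so integrating it out contributes $(1+o(1))\sqrt{4\pi\ell_0/3}$. The non-complex deviation is $s':=M-k-\ell-(n-k)/2=-\ell-xn^{2/3}/2-\eta$, so $s'/n^{2/3}\to-x/2$ (the $\ell/n^{2/3}\sim s/n^{4/3}$ term is $o(1)$ since $s=o(n)$); Theorem~\ref{thm:Britikov}(2) then contributes $\nu(-x/2)$ pointwise in $x$. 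The cubic $-x^3/6$ arises from the third-order Stirling expansion of $(n-k)^{n-k}$ and $M'!$ around $k=2s$, exactly as in the critical-phase analysis of $G(n,M)$ behind Theorem~\ref{thm:criticalGnM}; the linear $\gamma^{4/3}x/2$ emerges from tracking the $k$-dependence of the $\ell$-saddle terms $(3\ell_0(k)/2)\log(k/(3\ell_0(k)))$ in $\log C$ and their interplay with the Stirling remainders of $\binom{n}{k}$ and $\binom{\binom{n-k}{2}}{M'}$. Sum tails beyond a window of $O(n^{2/3}\log n)\times O(\sqrt{\ell_0}\log n)$ are controlled by the $\eta$-Gaussian decay together with the bound $\rho(n',M')\le\exp(-s'^3/n'^2)$ from Theorem~\ref{thm:Britikov}(3), so the Riemann sum converges to $\int_{-\infty}^\infty\exp(-x^3/6+\gamma^{4/3}x/2)\,\nu(-x/2)\,dx$.

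\textbf{Assembly and structural consequences.} Combining the prefactors from $\binom{n}{k}$ around $k=2s$ (giving $n^n/[(2s)^{2s}(n-2s)^{n-2s}]$ times a $\sqrt{n/(4\pi s(n-2s))}$ Stirling correction), from $C(k_0,k_0+\ell_0)$ via Theorem~\ref{thm:complex}(i), from the binomial in $U$ at $M'\sim(n-2s)/2$ (giving $e^{(n-2s)/2}/(n-2s)^{(n-2s)/2}$ times $1/\sqrt{\pi(n-2s)}$), and from the $\eta$-Gaussian produces the stated closed form, with the $\exp(\gamma^{4/3}s/n^{2/3})$ factor arising from the saddle value $(3/2)\ell_0$. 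For the structure, concentration of the summand on $(k_0,\ell_0)$ gives $L_{\rm c}(n,M)=(2+o(1))s$ and $\ex(n,M)=(2\gamma^{4/3}/3+o(1))s/n^{2/3}$. Since $\ell_0=O(k^{1/3})$ (which is equivalent to $s=O(n)$, satisfied here), Theorem~\ref{thm:complex}(iv) applies to the complex part: a unique giant of size $k-O(k/\ell_0)=2s-O(n^{2/3})$ together with $O(1)$ smaller complex components of size $\Theta(k/\ell_0)=\Theta(n^{2/3})$; hence $L_1(n,M)\sim 2s$. The non-complex side has $s'=\Theta(n^{2/3})$, so its largest component is $\Theta(n^{2/3})$ by Theorem~\ref{thm:criticalGnM}, giving $L_j(n,M)=\Theta(n^{2/3})$ for $j\ge 2$. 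Finally, $\Pcr(n,M)\sim\sqrt{3k\ell_0}=(2\gamma^{2/3}+o(1))s/n^{1/3}$ by Lemma~\ref{lem:complex-supercubickernel}.

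\textbf{Main obstacle.} The hard part will be the careful bookkeeping in the two-dimensional saddle-point: tracking every Stirling $\sqrt{\cdot}$ correction together with the $\eta$-Gaussian prefactor and verifying that the assembled constants collapse to the explicit $g\cdot 3^{5/2}/(2^7\sqrt\pi\gamma^{10/3}e^{3/4})$. Extracting the linear $\gamma^{4/3}x/2$ exponent from the $k$-dependence of $\ell_0(k)$ via a careful second-order expansion along the $\ell$-saddle curve, and handling the transition region where $s'/n^{2/3}$ is of order one (so that the Britikov asymptotic $\rho\to\nu$ from Theorem~\ref{thm:Britikov}(2) must be combined with the tail decay of part~(3) to justify the integral representation uniformly), are the most delicate points.
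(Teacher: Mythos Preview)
Your proposal is correct and follows essentially the same route as the paper: substitute Theorem~\ref{thm:complex}(i) and Britikov's decomposition into~\eqref{eq:main}, carry out a Gaussian Laplace sum in $\ell$ around $\ell_0(k)\sim\gamma^{4/3}k/(3n^{2/3})$ producing the factor $\sqrt{4\pi\ell_0/3}$, then set $k=2s+r$ with $r=n^{2/3}x$, obtain the cubic $-x^3/6$ from the expansion of $(n-k)^{2s-k}/(n+2s-2k)^{n/2+s-k}$ and the linear $\gamma^{4/3}x/2$ from the $k$-dependence of the $\ell$-saddle value, and pass to the integral via $\rho\to\nu(-x/2)$. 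The paper presents this as a sequential $\ell$-then-$k$ computation rather than as a joint two-dimensional saddle, and it tracks the full $\ell_0=\gamma^{4/3}k(n+2s-2k)^{2/3}/(3(n-k)^{4/3})$ before specializing to $k\sim2s$, but the mechanics and every intermediate object (Gaussian width, cubic remainder, $\nu$ limit, constant assembly) are identical; your derivation of the structural claims via Theorem~\ref{thm:complex}(iii)--(iv) and Theorem~\ref{thm:criticalGnM} is in fact slightly more explicit than the paper's.
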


\begin{proof}
From Theorem~\ref{thm:complex} and (\ref{eq:stirling}), we get
\begin{align*}
\binom{n}{k}&\; \stacksign{(\ref{eq:stirling})}{=}\;
\frac{(1+O(1/k))}{\sqrt{2\pi}}\frac{n^{n+1/2}}{(n-k)^{n-k+1/2}k^{k+1/2}},\\
C(k,k+\ell) &= \frac{g\ 3^{1/2}}{2^4}\
k^{k+3\ell/2-1/2}\left(\frac{\gamma^{2}e^{3/2}}{3^{3/2}}\right)^{\ell}\
\ell^{-3\ell/2-3}\\
& \times
\exp\left(\beta\sqrt{\frac{\ell ^3}{k}}
+O\left(\frac{\ell^2}{k}\right)+O\left(\frac{1}{\ell}\right)\right).
\end{align*}
In addition, the estimate
\begin{equation*}
\binom{\binom{n}{2}}{j} \;=\;
\frac{n^{2j}}{\sqrt{\pi} (2j)^{j+1/2}}
\exp\left(j-\frac{j}{n}-\frac{j^2}{n^2}+O\left(\frac{1}{n}\right)
+O\left(\frac{j}{n^2}\right)\right)\label{eq:nchoosek2}
\end{equation*}
and Lemma~\ref{thm:Britikov} give
\begin{align*}
U&(n-k,M-k-\ell)\\
&=\;  \rho(n-k,M-k-\ell)\binom{\binom{n-k}{2}}{M-k-\ell}\\
&=\;
(1+O(1/n))\frac{ \rho(n-k,n/2+s-k-\ell)}{\sqrt{\pi} e^{3/4}}\
  \frac{e^{n/2+s-k} (n-k)^{n+2s-2k}} {(n+2s-2k)^{n/2+s-k+1/2}}
  \left(\frac{n+2s-2k}{(n-k)^2}\right)^{\ell}.
\end{align*}
Therefore, we get
\begin{align}
\pl(n,M)
&=\sum_{k,\ell}\binom{n}{k}C(k,k+\ell)U(n-k,M-k-\ell)\nonumber\\
&=(1+O(1/n))
\frac{g\ 3^{1/2}}{2^{9/2} {\pi} e^{3/4}}
n^{n-1/2}e^{n/2+s}\nonumber\\
&\hspace{-5ex}\times \sum_k (1+O(1/k)+O(k/n)) \frac{\rho(n-k,n/2+s-k)\
(n-k)^{2s-k}}{ke^k(n+2s-2k)^{n/2+s-k}}\label{eq:firstrange1}\\
&\hspace{-5ex}\times \sum_{\ell}
\left(\frac{\gamma^{2}e^{3/2}k^{3/2}(n+2s-2k)}{3^{3/2}(n-k)^2
\ell^{3/2}}\right)^{\ell}  \ell^{-3}
\exp\left(\beta\sqrt{\frac{\ell ^3}{k}}
+O\left(\frac{\ell^2}{k}\right)+O\left(\frac{1}{\ell}\right)\right).
\nonumber
\end{align}

Now let
$\phi=\phi(n,s,k):=\frac{\gamma^{2}e^{3/2}k^{3/2}(n+2s-2k)}{3^{3/2}(n-k)^2}$.
Then the sum in (\ref{eq:firstrange1}) depending on $\ell$ becomes
\begin{equation}\label{eq:planar-lterm}
\sum_{\ell\ge 1}\left(\frac{\phi}{\ell^{3/2}}\right)^\ell \ell^{-3}
=\sum_{\ell\ge 1}\ell^{-3} \exp(b(\ell)),
\end{equation}
where the function $b(\ell)=b_{n,s,k}(\ell)$ is defined as
\begin{equation*}
b(\ell)=\ell \log \phi - \frac{3}{2} \ell \log \ell.
\end{equation*}
The main contribution to (\ref{eq:planar-lterm}) comes from
the terms $\ell=\ell_0+O(\sqrt{\ell_0})$, where
\begin{equation}\label{eq:l}
\ell_0=\ell_0(n,s,k):= e^{-1} \phi^{2/3}
=\frac{\gamma^{4/3}}{3} \frac{k(n+2s-2k)^{2/3}}{(n-k)^{4/3}}.
\end{equation}
Furthermore, we have
\begin{align*}
 \ell_0^{-3}\exp(b(\ell_0))
&=\,  \ell_0^{-3}\exp(3\ell_0 (\log \phi^{2/3} - \log \ell_0)/2) \\
&=\, \frac{3^3}{\gamma^4}\ \frac{(n-k)^{4}}{k^3(n+2s-2k)^{2}}\
 \exp\left(\frac{\gamma^{4/3}}{2}\
 \frac{k(n+2s-2k)^{2/3}}{(n-k)^{4/3}}\right)
\end{align*}
and
\begin{align*}
\sum_{\ell=\ell_0+O(\sqrt{\ell_0})}\exp(b(\ell)-b(\ell_0))
&=(1+o(1))\sum_{\ell=\ell_0+O(\sqrt{\ell_0})}
\exp\left(-\frac{3(\ell-\ell_0)^2}{4\ell_0}\right)\\
&=(1+o(1))\sqrt{\frac{4\pi \ell_0}{{3}}}\\
&=(1+o(1))\frac{2\pi^{1/2}\gamma^{2/3}}{3}\
\frac{k^{1/2}(n+2s-2k)^{1/3}}{(n-k)^{2/3}}.
\end{align*}
This implies
\begin{align*}
\sum_{\ell\ge 1}\left(\frac{\phi}{\ell^{3/2}}\right)^\ell \ell^{-3}
=& (1+o(1))\ \ell_0^{-3}\ \exp(b(\ell_0))
 \sum_{\ell=\ell_0+O(\sqrt{\ell_0})}\exp(b(\ell)-b(\ell_0))\\
=& (1+o(1))\frac{2\sqrt{\pi}3^2}{\gamma^{10/3}}
\frac{(n-k)^{10/3}}{k^{5/2}(n+2s-2k)^{5/3}}\\
&\quad\quad\quad\times \exp\left(\frac{\gamma^{4/3}k}{2}
\left(\frac{(n+2s-2k)}{(n-k)^2}\right)^{2/3}\right),
\end{align*}
and hence (\ref{eq:firstrange1}) becomes
\begin{align}
\pl(n,n/2+s)
&=(1+O(1/n))
\frac{g\ 3^{5/2}}{2^{7/2} {\pi}^{1/2}
e^{3/4}\gamma^{10/3}}n^{n+7/6}e^{n/2+s}
\nonumber\\
&\hspace{-10ex}\times \sum_k (1+O(1/k)+O(k/n))
\rho(n-k,n/2+s-k)\label{eq:firstrange2}\\
&\hspace{-10ex}\times
\frac{(n-k)^{2s-k}}{(n+2s-2k)^{n/2+s-k}}\frac{1}{k^{7/2}e^k}
\exp\left(\frac{\gamma^{4/3}k}{2}
\left(\frac{(n+2s-2k)}{(n-k)^2}\right)^{2/3}\right).\nonumber
\end{align}

We shall sum over $k$ in (\ref{eq:firstrange2}),
or, more specifically, over $r$ for $k=2s+r$, where $r=r(n,s)$
will shortly be determined.
Letting $k=2s+r$, we estimate the summands in (\ref{eq:firstrange2}) as
\begin{align*}
\frac{(n-k)^{2s-k}}{(n+2s-2k)^{n/2+s-k}}
&=\,  (n-2s)^{-(n/2-s)}
\left(1-\frac{r}{n-2s}\right)^{-r}\left(1-\frac{2r}{n-2s}\right)^{r-n/2-s}
\nonumber\\
&\stacksign{(\ref{eq:1+x})}{=}\, (n-2s)^{-(n/2-s)}
\exp\left(r-\frac{r^3}{6(n-2s)^2}+O\left(\frac{r^4}{(n-2s)^3}\right)\right).
\nonumber
\end{align*}
Therefore (\ref{eq:firstrange2}) becomes
\begin{equation}\label{eq:firstrange-sumr}
\begin{aligned}
\pl(n,&n/2+s)
=(1+O(1/n)+O(1/s)+O(s/n))\frac{g\ 3^{5/2}}{2^{7} {\pi}^{1/2}
e^{3/4}\gamma^{10/3}}n^{n+7/6}e^{n/2-s}\\
&\times  (n-2s)^{-(n/2-s)}
\exp\left(\frac{\gamma^{4/3}s}{(n-2s)^{2/3}}\right)
\sum_r \rho(n-2s-r,n/2-s-r)\varphi(r),
\end{aligned}
\end{equation}
where the function $\varphi(r)=\varphi_{n,s}(r)$ is defined as
\begin{equation*}
\varphi(r):=
\left(s+r/2\right)^{-7/2}\exp\left(\frac{\gamma^{4/3}r}{2(n-2s)^{2/3}}
-\frac{r^3}{6(n-2s)^2}+O\left(\frac{r^4}{(n-2s)^3}\right)
+O\left(\frac{r}{n}\right)\right).
\end{equation*}

Observe that the main contribution to  $\varphi(r)$,
and therefore to (\ref{eq:firstrange-sumr}),
comes from the terms $r=O(n^{2/3})$. Since
$n/2-s-r=(n-2s-r)/2-r/2$, from Lemma~\ref{thm:Britikov} (ii)
and Definition~\ref{def:nu}
we have $\rho(n-2s-r,n/2-s-r)\to\nu(-x/2)$,
when  $ r /(n-2s-r)^{2/3}  \to x$.
Thus, the sum over $r=O(n^{2/3})$ in (\ref{eq:firstrange-sumr})
can be replaced by an integral over $x=rn^{-2/3}$, and we get
\begin{align*}
\sum_{r} &\rho(n-2s-r,n/2-s-r)\varphi(r)\\
&=(1+o(1))\ n^{2/3}\ s^{-7/2}\int_{-\infty}^\infty
\exp\left(-\frac{x^3}{6}+\frac{\gamma^{4/3}x}{2}\right)
\nu\left(-\frac x2\right)dx.
\end{align*}
As as consequence, the first part of the theorem follows.

For the second part of the assertion note that in this case
the main contribution to the sum (\ref{eq:firstrange1}) follows from
$\ell$'s close to $\ell_0$ given in (\ref{eq:l}) and $k=(2+o(1))s$, and
so \aas $\ex(n,M)= (\frac{2\gamma^{4/3}}{3}+o(1))\frac{s}{n^{2/3}}$.
Therefore, by Theorem~\ref{thm:complex}, \aas $P(n,M)$ is clean,
its kernel has
$$(2+o(1))\ex(n,M)=\Big(\frac{4\gamma^{4/3}}{3}+o(1)\Big)\frac{s}{n^{2/3}}$$
vertices, and $\Pcr(n,M)=(2\gamma^{2/3}+o(1)) \frac{s}{n^{1/3}}$.
\end{proof}

\subsection{Middle range}\label{sec:middlerange}
As far as $M/n$ is bounded away from both $0$ and $1$ we can prove results
similar to Theorem~\ref{thm:super} but,
since now $\ell=O(n^{1/3})=O(k^{1/3})$,
we can estimate $\pl(n,M)$ only up to a constant factor.

\begin{theorem}\label{thm:middle}
If $M=an$ for a constant $1/2<a<1$, then
\begin{equation*}
\pl(n,M)=\Theta(1)
n^{an-5/3}\left(\frac{e}{2-2a}\right)^{n-an}
\exp\left(\gamma^{4/3}(a-1/2)n^{1/3}\right).
\end{equation*}
Furthermore, \aas $L_1(n,M)=(2a-1+o(1))n$, $\ex(n,M)=\Theta(n^{1/3})$,
and $\Pcr(n,M)=\Theta(n^{2/3})$.
\end{theorem}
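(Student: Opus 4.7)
My plan is to follow the template of the proof of Theorem~\ref{thm:super}, applied in the regime $s=M-n/2=(a-1/2)n=\Theta(n)$. Starting from the master identity (\ref{eq:main}),
\[
\pl(n,M) = \sum_{k,\ell}\binom{n}{k}\,C(k,k+\ell)\,U(n-k,M-k-\ell),
\]
I plug in Theorem~\ref{thm:complex}(i) for $C(k,k+\ell)$ and Theorem~\ref{thm:Britikov} together with Stirling's formula for $U(n-k,M-k-\ell)=\rho(n-k,M-k-\ell)\binom{\binom{n-k}{2}}{M-k-\ell}$. The key qualitative difference from the supercritical regime is that the saddle-point value of $\ell$ is now $\Theta(n^{1/3})=\Theta(k^{1/3})$, exactly at the boundary of applicability of Theorem~\ref{thm:complex}(iv); consequently the uncontrolled factor $\exp(\beta\sqrt{\ell^3/k})$ in Theorem~\ref{thm:complex}(i) is only $\Theta(1)$, which is the reason the statement can only be obtained up to a $\Theta(1)$ factor.

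I localize the main contribution by writing $k=(2a-1)n+r$. For $\rho(n-k,M-k-\ell)$ to be bounded away from zero one needs $|s'|^3/(n-k)^2=O(1)$ where $s':=(M-k-\ell)-(n-k)/2=-r/2-\ell$, which forces $r,\ell=O(n^{2/3})$. For each such $r$, the saddle-point analysis for the sum in $\ell$ is formally identical to (\ref{eq:l})--(\ref{eq:planar-lterm}); the saddle sits at
\[
\ell_0 = \frac{\gamma^{4/3}}{3}\frac{k(n+2s-2k)^{2/3}}{(n-k)^{4/3}}=\Theta(n^{1/3}),
\]
from which $\ex(n,M)\sim\ell_0=\Theta(n^{1/3})$ and $\Pcr(n,M)\sim\sqrt{3k\ell_0}=\Theta(n^{2/3})$ fall out directly. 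I then Laplace-evaluate the sum over $r$: following the passage from (\ref{eq:firstrange2}) to (\ref{eq:firstrange-sumr}), I expand the ratio $(n-2s-r)^{-r}/(n-2s-2r)^{n/2-s-r}$ through order $r^3/(n-2s)^2=O(1)$, change variables $x=r/n^{2/3}$, and use Theorem~\ref{thm:Britikov}(ii) to replace $\rho(n-k,M-k-\ell)$ by a continuous function of $x$. The resulting $x$-integral converges by the monotone decay of $\nu$ and of the cubic exponent, yielding a positive constant times $n^{2/3}\cdot s^{-7/2}$.

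Collecting the explicit prefactors---$e^{(1-a)n}$ from $e^{n/2-s}$, $((2-2a)n)^{-(1-a)n}$ from $(n-2s)^{-(n/2-s)}$, and the combined power $n^{n+7/6-17/6}=n^{an-5/3}$---and packaging the saddle value of the exponent as $\exp(\gamma^{4/3}(a-1/2)n^{1/3})$ yields the claimed formula for $\pl(n,M)$. For the structural assertions, the concentration of $k$ at $(2a-1)n+O(n^{2/3})$ combined with Theorem~\ref{thm:complex}(iv) (applicable since $\ell=\Theta(k^{1/3})$) produces a single giant complex component of size $k-O(k/\ell)=(2a-1+o(1))n$, while the non-complex residue lives on $(2-2a+o(1))n$ vertices near criticality, so by Theorem~\ref{thm:criticalGnM} (transferred to the conditioned measure via $\rho=\Theta(1)$) its components have size $\Theta(n^{2/3})=o(n)$; thus $L_1(n,M)=(2a-1+o(1))n$ a.a.s. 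The main bookkeeping obstacle is the Taylor expansion step: because $r$ now reaches the scale $(n-2s)^{2/3}$, every term up to $r^3/(n-2s)^2$ in the logarithmic expansions, together with the matching corrections to $\ell_0$, must be tracked coherently---routine in principle but delicate in practice.
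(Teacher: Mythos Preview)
Your proposal is correct and follows essentially the same route as the paper, whose proof consists of the single sentence ``Following the lines of the proof of Theorem~\ref{thm:super}, but with $s$ replaced by $an-n/2$, yields the assertion.'' You have simply spelled out what that sentence means, correctly identifying that the only qualitative change is $\ell_0=\Theta(k^{1/3})$, which makes the uncontrolled factor $\exp(\beta\sqrt{\ell^3/k})$ in Theorem~\ref{thm:complex}(i) a genuine $\Theta(1)$ and hence caps the precision at $\Theta(1)$. One cosmetic slip: the displayed equality ``$n^{n+7/6-17/6}=n^{an-5/3}$'' is not literally correct---the missing $n^{-(1-a)n}$ comes from splitting $(n-2s)^{-(n/2-s)}=((2-2a)n)^{-(1-a)n}$ into $(2-2a)^{-(1-a)n}\cdot n^{-(1-a)n}$, after which the combined power of $n$ is indeed $an-5/3$.
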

\begin{proof}
Following the lines of the proof of Theorem~\ref{thm:super},
but with $s$ replaced by $an-n/2$, yields the assertion.
\end{proof}

Using Theorems~\ref{thm:subcritical}-\ref{thm:middle} one can find
that the threshold for the property that $P(n,M)$ has
the chromatic number four is $M=n+o(n)$.

\begin{theorem}\label{cor:mid}
Let $\eps>0$.
\begin{enumerate}
\item If $M\le (1-\eps)n$, then \aas $\chi(P(n,M))=3$.
\item If $M\ge (1+\eps)n$, then \aas $P(n,M)$ contains a
copy of $K_4$, and as a consequence, $\chi(P(n,M))=4$.
\end{enumerate}
\end{theorem}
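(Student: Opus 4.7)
The two parts of the theorem need quite different arguments. \textbf{Part (ii)} is an immediate consequence of the subgraph-containment results mentioned in the introduction: for $M\ge (1+\eps)n$ with $M/n$ bounded (and thus bounded away from both $1$ and $3$), the results of Gim\'enez--Noy~\cite{GN} and McDiarmid--Reed~\cite{MR} imply that \aas $P(n,M)$ contains every fixed planar graph, in particular $K_4$. Hence $\chi(P(n,M))\ge 4$, and the Four Color Theorem gives equality.

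For \textbf{Part (i)} one needs $\chi(P(n,M)) \le 3$ \aas (the lower bound $\chi \ge 3$ is routine once $P(n,M)$ contains an odd cycle, which happens \aas whenever $M$ leaves the truly subcritical range). My plan is to build a proper 3-coloring in three successive layers, mirroring the kernel-core-tree decomposition used throughout the paper. First, 3-color the kernel. By Theorems~\ref{thm:subcritical}--\ref{thm:middle}, \aas $\ker(P(n,M))$ is a planar multigraph on $N=O(n^{1/3})$ vertices with minimum degree three and deficiency $d=O(1)$, and by Lemma~\ref{lem:cubic-giant} it has a single giant component of size $N-O(1)$ plus at most $O(1)$ tiny components. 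Deleting the $O(1)$ vertices of degree greater than three leaves a subcubic simple planar graph, none of whose components is $K_4$ (the giant one has size $\gg 4$, and small $K_4$ components are ruled out by counting via Theorem~\ref{thm:cubic-asymptotics}), so Brooks' theorem yields a proper 3-coloring; this is then extended back over the removed high-degree vertices using Kempe-chain-type recolorings, a bounded number of operations. Second, extend the coloring from the kernel to the $\core(P(n,M))$: each kernel edge corresponds to a path of length $\ge 1$ in the core, each loop to a cycle of length $\ge 3$, and each multi-edge to internally disjoint paths (all but at most one of length $\ge 2$, since $P(n,M)$ is simple), every one of which is trivially 3-colored with prescribed endpoint colors. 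Third, extend to all of $P(n,M)$: the vertices outside the core of each complex component lie in rooted trees attached to core vertices by step~(iv) of the construction in Section~\ref{sec:complexplanar}, and the non-complex components are trees or unicyclic; all of these admit a proper 3-coloring extending the coloring of the core.

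The main obstacle is the first layer, i.e.\ establishing 3-colorability of the random kernel \aas. While the a.a.s.\ connectedness and the near-cubic structure of the kernel make it plausible that Brooks' theorem applies, one must still rule out $K_4$ appearing as a small component of the kernel and, more delicately, ensure that the bounded number of degree-$>3$ vertices coming from nonzero deficiency (which is \aas $\Theta(1)$ when $M$ is close to $(1-\eps)n$, by Theorem~\ref{thm:complex}(iii) and Lemma~\ref{lem:cubic-deficiency}) can be absorbed into a 3-coloring through Kempe-chain manipulations. Both points should yield to refined counting based on Theorem~\ref{thm:cubic-asymptotics} and Lemma~\ref{lem:cubic-deficiency} combined with standard recoloring arguments, but the execution will require care.
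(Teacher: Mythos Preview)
Your treatment of Part~(ii) matches the paper's: both invoke the Gim\'enez--Noy enumeration together with a standard second-moment/containment argument to get $K_4\subseteq P(n,M)$ \aas, and then the Four Color Theorem.

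For Part~(i), however, your plan contains a real gap, and the paper's route is different and simpler. You propose to $3$-color the \emph{kernel} first via Brooks' theorem and then extend outward. The obstacle you yourself flag---ruling out $K_4$ as a small component of the kernel---cannot in fact be overcome: by Lemma~\ref{lem:cubic-giant}(a), the random weighted cubic planar multigraph on $N$ vertices has a $4$-vertex fragment with probability bounded away from~$0$, and $K_4$ (a legitimate connected cubic planar graph of weight~$1$) accounts for a positive fraction of that event. So the kernel \emph{does} contain a $K_4$ component with probability bounded away from zero, and Brooks' theorem fails on that component. Your fallback, absorbing the $O(1)$ high-degree vertices by ``Kempe-chain-type recolorings'', is also unjustified: Kempe chains do not in general salvage $3$-colorings the way they do $5$- (or~$4$-) colorings.

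The paper never attempts to $3$-color the kernel. Instead it exploits the disparity of scales between core and kernel: with $M=an$, $1/2<a<1$, the core has $\Theta(n^{2/3})$ vertices while the kernel has only $\Theta(n^{1/3})$ edges, so each kernel edge receives on average $\Theta(n^{1/3})$ subdivision vertices in the core. A simple balls-in-boxes estimate shows that \aas\ only $o(\log n)$ kernel edges carry fewer than (say) three core vertices, and no kernel vertex is incident with two such ``short'' edges. A core with this property is $3$-colorable directly---for instance, give every kernel vertex the same color and extend along each subdivided edge---regardless of whether the kernel itself is $3$-colorable. In particular, a $K_4$ component of the kernel causes no trouble, because in the core it becomes a proper subdivision of $K_4$, which \emph{is} $3$-colorable. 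Your layered scheme can be rescued along these lines, but once you use the heavy-subdivision observation the Brooks/Kempe machinery becomes superfluous.
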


\begin{proof} Here we only sketch the argument. Let $M=an$, $1/2<a<1$.
Then \aas the kernel of $P(n,M)$ has deficiency $\Theta(1)$
and $\Theta(n^{1/3})$ vertices
(see Theorem~~\ref{thm:complex} and~\ref{thm:middle}).
Furthermore, on the edges of the kernel we need to place
$\Theta(n^{2/3})$ vertices of the core. Thus, the probability
that on some edge we place fewer than five vertices is $\Theta(n^{-1/3})$,
and so there are \aas at most $\ln\ln n$ edges of the kernel
of  $P(n,M)$ which contain fewer than three vertices of the core.
Moreover,  none of the vertices of the kernel is incident with more than
one such  edge.
It is easy to see that such a graph can be colored using
three colors. A similar argument shows that  $\chi(P(n,M))\le 3$ for
$M\le n/2+o(n)$. On the other hand, from the formula for $\pl(n,M)$
for $M\ge (1+\eps)n$ by Gim{\'e}nez and Noy~\cite{GN} and
Chebyshev's inequality it follows
that for such an $M$ the graph $P(n,M)$ contains a copy of $K_4$
(in fact it \aas contains $\Theta(n)$ copies of $K_4$, in
which three vertices have  degree three in $P(n,M)$).
\end{proof}

Let us remark that Dowden~\cite{Dowden} studied the probability
that $P(n,M)$ contains a given subgraph and determined its asymptotic
behavior depending on the ratio $M/n$. In particular,
Theorem~\ref{cor:mid} (ii) is relevant to his Theorem 17.

\subsection{Second critical range}\label{sec:secondrange}
In the previous section we showed that as far as
$M=an$, and $a\in (1/2,1)$, the size of the largest component
grows with $M$, but its density does not depend much on the value
of $a$ and in the whole range is of the order $n^{1/3}$.
Clearly, this situation must change when the size of
the largest component is $n-o(n)$. Indeed, starting from some
point, the increase in the number of edges of $P(n,M)$
must contribute to the density of the largest component,
since  when $M=n+t$, for $t$ large enough,
we should expect $\ex(n,M)=(1+o(1))t$.
Our next result states that this change occurs when $M=n+O(n^{3/5})$.

\begin{theorem}\label{thm:secondrange}
Set $M=n+t$, where $t=o(n)$.
\begin{enumerate}
\item[(i)] Let $w=w(n,t)
=\frac{\gamma^{4/3}(n-2|t|)}{3\cdot 2^{2/3}|t|^{2/3}}.$
If $t\ll -n^{3/5}$, but $n/2+t\gg n^{2/3}$, then
\begin{equation}\label{eq:sec1}
\begin{aligned}
\pl(n,M)
=\Theta(1)\ &n^{n-1/2} \frac{(2|t|+2w)^{t+1/6}}{(3|t|+5w)^{1/2}w^{5/2}}
\left(\frac{|t|}{|t|+w}\right)^{w}\\
&\times \exp\left(\frac{5w}{2}+|t|-\frac{3w^2}{n-2|t|}
+\beta\cdot \frac{\gamma^2}{3^{3/2}2} \frac{n-2|t|}{|t|}\right).
\end{aligned}
\end{equation}
Furthermore, \aas $L_1(n,M)=n-(2+o(1))|t|$,
$\ex_{\rm c}(n,M)=(1+o(1))w$, and
$\Pcr_{\rm c}(n,M)
=(\frac{\gamma^{2/3}}{2^{1/3}}+o(1))\frac{n-2|t|}{|t|^{1/3}}$.
\item[(ii)] Let $b$ be the unique positive solution of the equation
$b^{3/2}(b-c)=\frac{\gamma^{2}}{2\cdot 3^{3/2}}$.
If $t=c\cdot n^{3/5}$ for $c\in (-\infty, \infty)$, then
\begin{equation}\label{eq:sec:add}
\begin{aligned}
\pl(n,M)
=\Theta(1)\ &n^{n-1/2}t^{-17/6} (2(b/c-1)t)^{t}\\
&\times \exp\left(\left(\frac{5b}{2c}-1\right)t
-\frac{3b}{c}\left(\frac{b}{c}-1\right)\frac{t^2}{n}
+\beta\cdot \frac{(bt/c)^{3/2}}{n^{1/2}}\right).
\end{aligned}
\end{equation}
Furthermore, \aas $L_1(n,M)=n-(2b-2c+o(1))n^{3/5}$,
$\ex_{\rm c}(n,M)=(b+o(1))n^{3/5}$, and $\Pcr_{\rm c}(n,M)
=\Theta(n^{4/5})$. %
\item[(iii)] Let $z=z(n,t)
=\frac{\gamma^{2}}{2\cdot 3^{3/2}}\left(\frac{n}{t}\right)^{3/2}$.
If $t\gg n^{3/5}$, then
\begin{equation}\label{eq:sec2}
\begin{aligned}
\pl(n,M)=\Theta(1)\
&\frac{n^{n-1/2}\ (2z)^{t +1/6}}{(2t+5z)^{1/2}
{(t+z)^{5/2}}} {\left(\frac{t}{t+z}\right)^{3(t+z)/2}}\\
&\times
\exp\left(\frac{3}{2}t+\frac{5}{2}z-\frac{3z(t+z)}{n}
+\beta\cdot \frac{t^{3/2}}{n^{1/2}}\right).
\end{aligned}
\end{equation}
Furthermore, if in addition $t\ll n^{2/3}$, \aas
$L_1(n,M)=n-(\gamma^2/3^{3/2}+o(1))({n}/{t})^{3/2}$,
$\ex_{\rm c}(n,M)=(1+o(1)(t+z)$, and $\Pcr_{\rm c}(n,M)
=\Theta(\sqrt{nt})$.
\end{enumerate}
\end{theorem}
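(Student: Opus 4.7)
The plan is to follow the template of the proof of Theorem~\ref{thm:super}: start from the decomposition (\ref{eq:main}), substitute the asymptotic formula for $C(k,k+\ell)$ from Theorem~\ref{thm:complex}(i) together with the factorisation $U(n-k,M-k-\ell) = \rho(n-k,M-k-\ell)\binom{\binom{n-k}{2}}{M-k-\ell}$ and Theorem~\ref{thm:Britikov}, expand the binomial coefficients by Stirling, and evaluate the resulting double sum by the Laplace method. What differs from Theorem~\ref{thm:super} is only the location of the saddle: since we now expect the complex components to cover nearly all vertices, it is convenient to change variables to $j = n-k$, the size of the non-complex remainder, and, in parts (ii)--(iii), to $w = \ell - t$, the excess of the complex part beyond the minimum forced by $M-n$, so that the non-complex residue is a graph on $j$ vertices with $j-w$ edges.

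After the substitution the summand takes the form of a polynomial prefactor times $\exp(\Phi(j,\ell))$, where $\Phi$ collects the entropic contributions from $\binom{n}{j}$ and $\binom{\binom{j}{2}}{M-k-\ell}$, the complex-component weight from Theorem~\ref{thm:complex}(i), and the factor $\rho$ treated according to whichever regime of Theorem~\ref{thm:Britikov} applies. Solving $\partial_j\Phi = \partial_\ell\Phi = 0$ yields the claimed saddle in each part (in particular, in part~(ii) the quantity $b$ is the solution of the saddle equation $b^{3/2}(b-c) = \gamma^{2}/(2\cdot 3^{3/2})$), and a Gaussian integration transverse to the saddle produces the polynomial prefactors in (\ref{eq:sec1}), (\ref{eq:sec:add}), and (\ref{eq:sec2}). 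In parts (i) and (iii) the non-complex residue at the saddle lies in the subcritical regime of Theorem~\ref{thm:Britikov}, so $\rho\to 1$; in part (ii) it sits in the critical window and contributes, exactly as in (\ref{eq:firstrange-sumr})--(\ref{def:nu}) of the proof of Theorem~\ref{thm:super}, an integral of $\nu$, which is absorbed into the $\Theta(1)$ constant.

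The structural assertions follow at once from the saddle: $L_{\rm c} = n - j^*$, $\ex_{\rm c} = \ell^*$, and by Theorem~\ref{thm:complex}(iii) the core size $\Pcr_{\rm c} = (1+o(1))\sqrt{3(n-j^*)\ell^*}$. The identification of $L_1$ with $L_{\rm c}$ (up to lower order) in parts (i) and (iii) requires that the complex part be dominated by a single giant component, and the condition $t \ll n^{2/3}$ in part (iii) is exactly the range in which this can be ensured via an extension of Theorem~\ref{thm:complex}(iv). The threshold $n^{3/5}$ itself arises by balancing within $\Phi$ the term extracted from maximising $C(k,k+\ell)$ over $\ell$ against the entropic cost of placing $j-w$ edges on $j$ vertices subcritically; these two contributions match precisely when $|t|\asymp n^{3/5}$.

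The main technical obstacle is bookkeeping the saddle-point expansion uniformly through the matching ranges of parts (i)--(ii) and (ii)--(iii), and verifying that the approximation errors from Theorem~\ref{thm:complex}(i) and Theorem~\ref{thm:Britikov} remain controllable in the exponent at the saddle. The unknown constant $\beta(k,\ell)$ in Theorem~\ref{thm:complex}(i) contributes an $\exp(\Theta(\sqrt{\ell^3/k}))$ factor at the saddle which cannot be pinned down more precisely than its explicit placement in the three formulas, and this is the reason each of them is stated only up to a $\Theta(1)$ multiplicative constant rather than $1+o(1)$.
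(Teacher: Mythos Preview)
Your overall strategy is the paper's: substitute Theorem~\ref{thm:complex} and Theorem~\ref{thm:Britikov} into the decomposition~(\ref{eq:main}), swap to $j=n-k$, and do a Laplace analysis first in $j$ and then in $\ell$. The structural conclusions are drawn from the saddle location exactly as you describe.

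There is, however, one concrete error. You assert that in parts (i) and (iii) the non-complex residue lies in the \emph{subcritical} regime of Theorem~\ref{thm:Britikov}, with $\rho\to 1$, and that only in part (ii) does it sit in the critical window. This is wrong: in all three regimes the $j$-saddle is at $j_0=2(\ell-t)$, so the residue has $j_0$ vertices and $j_0+t-\ell=\ell-t=j_0/2$ edges, i.e.\ it is \emph{always} exactly at criticality. (This is precisely the heuristic equation~(\ref{eq:heur1}) in the paper's concluding remarks.) Consequently the paper sums over the window $j=j_0+r$ with $r=O(j_0^{2/3})$ and obtains a common factor
\[
\int_{-\infty}^{\infty}\exp\!\Big(\frac{x^3}{6}-\frac{\gamma^{4/3}x}{2}\Big)\,\nu\!\Big(\frac{x}{2}\Big)\,dx
\]
\emph{before} splitting into the three cases; only the remaining $\ell$-sum $A(n,t)$ distinguishes (i), (ii), (iii). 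Since all three formulas are stated up to $\Theta(1)$ this does not alter the final answers, but if you carry out your plan literally and impose $\rho=1$ in (i) and (iii) you will locate the $j$-saddle correctly from the exponential part and then discover that your subcriticality assumption is inconsistent with it.

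A minor point: the restriction $t\ll n^{2/3}$ in (iii) is imposed in the paper not to invoke Theorem~\ref{thm:complex}(iv), but because for larger $t$ the unknown factor $\exp(\beta\, t^{3/2}/n^{1/2})$ becomes too coarse to pin down the saddle well enough for the structural claims.
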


\begin{proof}
In order to show (i) we follow the argument presented in
Section~\ref{sec:supercritical}.
Thus, from Theorem \ref{thm:complex} and Lemma \ref{thm:Britikov}, we get
\begin{align}
\pl(n,n+t)
=&\sum_{k,\ell}\binom{n}{k}C(n-k,n-k+\ell)U(k,k+t-\ell)  \nonumber\\
=&(1+o(1))\frac{g\ 3^{1/2}}{{\pi}  2^{5}}\ {n^{n-1/2}}
\sum_{\ell} \left(\frac{\gamma^{2}e^{3/2}}{3^{3/2}}\right)^{\ell}
  \frac{1}{\ell^{3\ell/2+3}}
\sum_k \psi(k),\label{eq:secondrange1}
\end{align}
where the function $\psi(k)=\psi_{n,t}(k)$ is defined as
\begin{align*}
\psi(k)&:=
\rho(k,k+t-\ell) (k(k+t-\ell))^{-1/2} (n-k)^{3\ell/2} \ k^{k+2t-2\ell}\
\left(\frac{e}{2(k+t-\ell)}\right)^{k+t-\ell}\nonumber\\
&\hspace{-5ex}\times
\exp\left(-\frac{k+t-\ell}{k}-\left(\frac{k+t-\ell}{k}\right)^2
+O\left(\frac{1}{k}\right)+O\left(\frac{k}{n}\right)
+\beta\sqrt{\frac{\ell ^3}{n-k}}+O\left(\frac{\ell^2}{n-k}\right)\right).
\end{align*}

In order to estimate (\ref{eq:secondrange1}) observe that the maximum
of the sum $\sum_k \psi(k)$ is taken at $k_0=2(\ell-t)$; more precisely,
the main contribution to the sum comes from the terms
$k=2(\ell-t)+r$, where $r=O\big((2(\ell-t))^{2/3}\big)$.
Thus,
\begin{align*}
\sum_k \psi(k)
=(1&+o(1)) e^{-3/4} 2^{1/2}
\frac{e^{\ell-t}(n-2(\ell-t))^{3\ell/2}}{(2(\ell-t))^{\ell-t+1}}\\
&\times
\sum_{r} \rho(2(\ell-t)+r,\ell-t+r)
\exp\left(\frac{r^3}{6(2(\ell-t))^2}-\frac{3r\ell}{2(n-2(\ell-t))}\right)\\
&\times \exp\left(\beta\sqrt{\frac{\ell^3}{n-2(\ell-t)}}
+O\left(\frac{\ell^2}{n-2(\ell-t)}\right)\right),
\end{align*}
where we sum  over $r=O\big((2(\ell-t))^{2/3}\big)$.
Note also that $\ell-t+r=\frac{2(\ell-t)+r}{2}+\frac{r}{2}$, and so
if $ r/(2(\ell-t)+r)^{2/3}\to x$,  then
$\rho(2(\ell-t)+r,\ell-t+r)\to\nu\left(\frac{x}{2}\right)$.

Next note that the main contribution to the sum over $\ell$ in
(\ref{eq:secondrange1}) comes
from the terms $\ell=\ell_0+O(\sqrt{\ell_0})$,  where
$\ell_0=\frac{\gamma^{4/3}(n-2(\ell_0-t))}{3 (2(\ell_0-t))^{2/3}}$,
so (\ref{eq:secondrange1}) can be estimated by
\begin{equation}\label{eq:main2}
\begin{aligned}
\pl(n,n+t)
=(1&+o(1))\frac{g\ 3^{1/2}}{{\pi}  2^{9/2} e^{3/4}}\ {n^{n-1/2}}\
\int_{-\infty}^{\infty} \exp\left(\frac{x^3}{6}
-\frac{\gamma^{4/3}x}{2}\right) \nu\left(\frac{x}{2}\right)dx\\
&\times
A(n,t)\ \exp\left(\beta\sqrt{\frac{\ell_0^3}{n-2(\ell_0-t)}}
+O\left(\frac{\ell_0^2}{n-2(\ell_0-t)}\right)\right),
\end{aligned}
\end{equation}
where the function $A(n,t)$ is defined as
\begin{equation*}
A(n,t):=\sum_{\ell} \left(\frac{\gamma^{2}e^{3/2}}{3^{3/2}}\right)^{\ell}\
\frac{e^{\ell-t}(n-2(\ell-t))^{3\ell/2}}{\ell^{3\ell/2+3}(2(\ell-t))^{\ell-t+1/3}}.
\end{equation*}

The behavior of $A(n,t)$ depends on the range of $t$.
Thus, we consider three cases, corresponding to the periods described
in the three parts of the assertion of Theorem~\ref{thm:secondrange}.

\medskip

First, let $t\ll -n^{3/5}$, but $n/2+t\gg n^{2/3}$.
The main contribution to the sum $A(n,t)$ comes from
the terms $\ell=\ell_0+O(\sqrt{\ell_0})$ with $\ell_0=w$,
where $$w=w(n,t):=\frac{\gamma^{4/3}(n-2|t|)}{3\cdot |2t|^{2/3}}.$$
In this case we have
\begin{equation*}
A(n,t)
=(1+o(1))(2\pi)^{1/2}\frac{(2(w-t))^{t+1/6}}{(5w-3t)^{1/2}w^{5/2}}
\left(\frac{|t|}{w-t}\right)^{w}
\exp\left(\frac{5w}{2}-t-\frac{3w^2}{n+2t}\right)\,,
\end{equation*}
and (\ref{eq:sec1}) follows. In order to get the
information on the structure of $P(n,M)$ for this range
one should apply Theorem~\ref{thm:complex} and observe that
\begin{equation*}
\sqrt{\frac{\ell_0^3}{n-2(\ell_0-t)}}
= (1+o(1))\frac{\gamma^2}{3^{3/2}2} \frac{n+2t}{|t|}.
\end{equation*}

\medskip

Now let $t=c\, n^{3/5}$ for some constant $c\in (-\infty, \infty)$.
In this case, the main contribution to the sum $A(n,t)$ comes from
the terms $\ell=\ell_0+O(\sqrt{\ell_0}) $ with
$\ell_0=b\, t/c$, where $b$ is the unique positive solution of the equation
$b^{3/2}(b-c)=\frac{\gamma^{2}}{2\cdot 3^{3/2}}$. We have
\begin{align*}
A(n,t)
=(1+o(1))(2\pi)^{1/2}&\frac{(2(b-c))^{1/6}c^{3}}{(5b-3c)^{1/2}b^{5/2}}
\ t^{-17/6} (2(b/c-1)t)^{t}\\
&\times\exp\left(\left(\frac{5b}{2c}-1\right)t
-\frac{3b}{c}\left(\frac{b}{c}-1\right)\frac{t^2}{n}\right),
\end{align*}
and
\begin{equation*}
\sqrt{\frac{\ell_0^3}{n-2(\ell_0-t)}}
=(1+o(1)) \frac{(bt/c)^{3/2}}{n^{1/2}}.
\end{equation*}
Therefore, (ii) follows from (\ref{eq:main2})
and Theorem~\ref{thm:complex}.

\medskip

Finally, let $t\gg n^{3/5}$, but $t=o(n)$.
Then the main contribution to the sum $A(n,t)$ comes from
the terms $\ell=\ell_0+O(\sqrt{\ell_0})$ with $\ell_0=t+z$,  where
$$z=z(n,t):=\frac{\gamma^{2}}{2\cdot 3^{3/2}}
\left(\frac{n}{t}\right)^{3/2}.$$
We have
\begin{align*}
A(n,t)
=(1+o(1))(2\pi)^{1/2}&\frac{(2z)^{t +1/6}}{(2t+5z)^{1/2} {(t+z)^{5/2}} }
{\left(\frac{t}{t+z}\right)^{3(t+z)/2}}\\
&\times \exp\left(\frac32t+\frac52z-\frac{3z(t+z)}{n}\right),
\end{align*}
which, together with (\ref{eq:main2}), gives (\ref{eq:sec2}).

The second part of (iii) follows from (\ref{eq:main2}),
Theorem~\ref{thm:complex}, and the observation that
\begin{equation*}
\sqrt{\frac{\ell_0^3}{n-2(\ell_0-t)}}
=(1+o(1)) \frac{t^{3/2}}{n^{1/2}}.
\end{equation*}
We also remark that we expect  the estimates for
$\ex_{\rm c}(n,M)$ and $\Pcr_{\rm c}(n,M)$
to hold also for  $n^{2/3}\le t\ll n$  but the error term
$O(t^{3/2}n^{-1/2})$  in (\ref{eq:sec2}) becomes too large
to provide a precise information on the structure
of $P(n,M)$. Let us also point out that if Giant Conjecture is true,
then $\ex(n,M)=(1+o(1))\ex_{\rm c}(n,M)$
and $\Pcr(n,M)=(1+o(1))\Pcr_{\rm c}(n,M)$
in the whole range of $M$.
\end{proof}

\section{Concluding remarks}

Let us first propose some heuristic which explains the
behavior of $P(n,M)$ as described in
Theorems~\ref{thm:subcritical}--\ref{thm:secondrange}.
Note that the number $\pl(n,M)$ of planar graphs with
$n$ vertices and $M$ edges can be computed in two different ways.
Our estimates were based on the formula
\begin{equation*}
\pl(n,M) = \sum_{k,\ell}\binom{n}{k}C(k,k+\ell)U(n-k,M-k-\ell),
\end{equation*}
where we extracted from the graph its complex part. But one could also
use the formula
\begin{equation}\label{eq:main3}
\pl(n,M) = \sum_{k,\ell}\binom{n}{k}C^{conn}(k,k+\ell)R_{pl}(n-k,M-k-\ell),
\end{equation}
where we first identify in the graph the largest component of
$k$ vertices and $k+\ell$ edges, which typically
is complex and unique, and then we supplement it by a random planar  graph
of $n-k$ vertices and $M-k-\ell$ edges.
However, Theorem~\ref{thm:complex}(iv) states that
(at least for small $\ell$)
a graph chosen at random from all complex planar graphs with
$k$ vertices and $k+\ell$ edges consists of the giant component
of size $k-O(k/\ell)$  and, possibly,
some small  components of finite complexity and size $\Theta(k/\ell)$.
If the Giant Conjecture is true, it is in fact the case for all
values of $\ell\le k$.   Thus,
the planar graph which is outside the largest component must contain
just a few (if any) components which are complex. It happens only if
its density is such as the density of the standard uniform
graph model in the critical period. Consequently, in (\ref{eq:main3}),
we must have
\begin{equation}\label{eq:heur1}
M-k-\ell= (n-k)/2+\Theta((n-k)^{2/3})\,,
\end{equation}
and, since in the critical period the sizes of all complex components
of a random graph on $n$ vertices are of the order $n^{2/3}$,
\begin{equation}\label{eq:heur2}
k/\ell=\Theta((n-k)^{2/3})\,.
\end{equation}

Let us check what it gives when $M=n/2+s$ with $s\gg n^{2/3}$
to ensure that $P(n,M)$ contains a complex and unique giant component.
If $k\ll M$, then from (\ref{eq:heur1}) we get
$$L_1(n,M)=2M-n+O(\ell)+O(n^{2/3})=2s+O(\ell)+O(n^{2/3})\,, $$
and, by (\ref{eq:heur2}),
$$\ell=\Theta\Big(\frac{k}{(n-k)^{2/3}}\Big)
=\Theta\Big(\frac{s}{(n-s)^{2/3}}\Big)\,$$
which fits perfectly the estimates from Theorem~\ref{thm:super}
and~\ref{thm:middle}.
Now let $M=n-t$, $t=o(n)$. Then, as before, from~(\ref{eq:heur1})
and~(\ref{eq:heur2}) we infer that
\begin{equation}\label{eq:heur3}
n-L_1(n,M)=2t+O( (  n-L_1(n,M) )^{2/3} )+O(\ell),
\end{equation}
and
\begin{equation}\label{eq:heur4}
\ell= \Theta\Big(\frac{n}{(n-L_1(n,M))^{2/3}}\Big).
\end{equation}
Note that when $-t$ is large, then, clearly, $\ell=\Theta(|t|)$,
and the error term $O(\ell)$ in (\ref{eq:heur3}) is equal to the
main term $2t$. In order to compute this `threshold value' of
$t$ one should use (\ref{eq:heur4}) and substitute to it $\ell=\Theta(|t|)$
and $n-L_1(n,M)=\Theta(t)$. It gives $|t|=\Theta(n^{3/5})$
as the threshold value for the property that $\ell=\Theta (|t|)$.
Consequently, if $t\gg n^{3/5}$, then
we have $n-L_1(n,M)=(2+o(1))t$, as stated
in Theorem~\ref{thm:secondrange}(i), while
for $t\ll -n^{3/5}$, we have $\ell=\Theta(|t|)$ and,
by (\ref{eq:heur4}), $n-L_1(n,M)=\Theta((n/t)^{3/2})$,
which agrees with Theorem~\ref{thm:secondrange}(iii).

Since for $M<an$, where $a<1$, the random graph $P(n,M)$ is \aas
quite sparse (see Theorem~\ref{thm:middle}), while for $M>an$, $a>1$,
it is quite dense (e.g.\ \aas it contains a copy of any given planar graph),
it seems that the most interesting period in the evolution of $P(n,M)$
is for $M=n-o(n)$. This intuition is confirmed by Theorem~\ref{cor:mid},
but clearly a lot remains to be done.
For instance, it seems that the correct threshold function for the
property that $\chi(P(n,M))=4$ is $M=n+\Theta(n^{7/9})$, more precisely,
we conjecture that the following holds.

\medskip

{\bf Conjecture} {\sl If $(M-n)/n^{7/9}\to 0$,
then \aas $\chi(P(n,M))\le 3$,
while if  $(M-n)/n^{7/9}\to \infty$, then \aas $P(n,M)\supseteq K_4$ and
so $\chi(P(n,M))=4$.}

\medskip

Let us briefly justify the above claim that $K_4$ emerges in $P(n,M)$
when $M=n+\Theta(n^{7/9})$.
First of all, in order to have a single copy of $K_4$ in
$P(n,M)$ we need a lot of copies of
$K_4$ in the kernel of
the largest component of $P(n,M)$. The number of copies of $K_4$
in the kernel should be of the same order as the number of vertices
of degree four in the kernel, which, in turn, is expected to be
of the order of deficiency of the graph. Once we have a copy of $K_4$
in the kernel, the probability that after placing vertices of the core
at the edges of the kernel none of these vertices will be put at one
of six edges of $K_4$ is $\Theta((\ker(n,M)/\core(n,M))^6)$.  Hence,
if our estimates for $\ker(n,M)$ and $\core(n,M)$ from
Theorem~\ref{thm:secondrange}(iii) remain valid for all values of
$M$ such that $n^{3/5}\ll M-n\ll n$, then the expected number of
copies of $K_4$'s in $P(n,M)$ is of the order
$$\dd(n,M) \cdot\Big(\frac{\ker(n,M)}{\core(n,M)} \Big)^6\sim
\sqrt{\frac{\ell^3}{k}}\Big(\frac{\ell}{\sqrt{k\ell}} \Big)^6=
\ell^{9/2}k^{-7/2}\sim t^{9/2}n^{-7/2}.$$
Thus, this number is bounded away from zero for $t=\Theta(n^{7/9})$.

Let us also add a few words  on the models of random planar graphs
different from $P(n,M)$. One of the most natural one is the
graph obtained by the random planar process, when we
add to  an empty graph on
$n$ vertices $M$ edges one by one  each time
choosing a new edge uniformly at random from
all pairs which preserve  planarity of the
graph (see Gerke {\it et al.}~\cite{GSST}). In this model
the structure of components is similar to that of a standard
graph $G(n,M')$ for an appropriately chosen $M'\ge M$. Another
model of random planar graph is the binomial random graph $P(n,p)$,
when we look at properties of $G(n,p)$ conditioned on the fact that
it is planar. Equivalently, one can view $P(n,p)$ as the graph
chosen from the family ${\mathcal P}(n)$ of all planar graphs
on $n$ vertices in such a way that each $G\in {\mathcal P}(n)$ appears
as $P(n,p)$ with the probability
\begin{equation}\label{eq:c:1}
\pr(P(n,p)=G)=p^{e(G)}(1-p)^{\binom n2 -e(G)}/Z(n,p),
\end{equation}
where $e(G)$ denotes the number of edges of $G$, and
\begin{equation}\label{eq:c:2}
Z(n,p)=\sum_{G\in \mathcal P (n)}p^{e(G)}(1-p)^{\binom n2 -e(G)}\,.
\end{equation}
Since clearly for every property $\mathcal A$
$$\pr(P(n,p)\ \textrm{has}\ \mathcal A|e(P(n,p))=M)=
\pr(P(n,M)\ \textrm{has}\ \mathcal A)\,,$$
once we determine  the typical number of edges in $P(n,p)$
the problem of finding properties of $P(n,p)$ reduces
to studying these properties for $P(n,M)$. From the estimates
of $\pl(n,M)$ given in Theorems~\ref{thm:subcritical}--\ref{thm:middle}
and \ref{thm:secondrange} it follows
that if $np\le 1$ then \aas $P(n,p)$ has $M=(1+o(1))p\binom n2$ edges;
if $1/n\le p\ll n^{-3/5}$, then \aas $M=n-(1+o(1))/(2p)$;
for $p=O(n^{-3/5})$ we are in the second critical
period, i.e.\ \aas $M=n+O(n^{3/5})$; finally for
$p\gg n^{-3/5}$ we have \aas $M=n+\Theta(p^{2/3}n)$.
Note that a large part of the evolution of $P(n,p)$,
when $1/n\ll p\ll 1$, corresponds to the period
of evolution of $P(n,M)$ when $M=n+o(n)$ which, as we have
already remarked, is crucial for many properties of $P(n,M)$.

Another interesting model is a random cluster model $P(n,M,q)$
on planar graphs when for every labeled planar graph
with vertex set $[n]$ we put
\begin{equation*}\label{eq1}
\pr(P(n,M,q)=G)=q^{c(G)}/Z(n,M,q),
\end{equation*}
where $q>1$ is a parameter,  $c(G)$ stand for the
number of components in $G$, and $Z(n,M,q)$ is the normalizing factor.
In a similar way one can define $P(n,p,q)$ adding factors $q^{c(G)}$
to the right hand sides of~(\ref{eq:c:1}) and~(\ref{eq:c:2}).
It is well known that the additional cluster factor $q^{c(G)}$ in,
say, the standard model $G(n,p)$ leads to an interesting phenomena
such as the discontinuous
phase transition which occurs in $G(n,p,q)$ for $q>2$ (cf.\ Luczak and
{\L}uczak~\cite{LL}). Unfortunately, no such event can be observed
in the planar case. The evolution of $P(n,M,q)$ is quite similar to that
of $P(n,M)=P(n,M,1)$. The reason is quite simple:
the giant complex component
of $P(n,M)$ is very sparse until it reaches the size $n-o(n)$ and so
the number of components is always close to  $n-M+o(n)$ and, as calculations
show, cannot be influenced much by the presence
of the additional factor $q^{c(G)}$.
The asymptotic behavior of  $P(n,p,q)$ does not depend very much
on the value of $q$ either  except for the scaling:
the number of edges of $P(n,p,q)$ is roughly the same as for $P(n,p/q)$.

\section*{Acknowledgments}
A part of this work was carried out when both the authors visited
the Mittag-Leffler Institute and completed during the first author's
stay at Adam Mickiewicz University within the DFG Heisenberg Programme.
We would like to thank these institutions for their support.
The second author is supported by the Foundation for Polish Science.

\end{document}